\newcommand{\ignore}[1]{}
\newtheorem{theorem}{Theorem}[section]
\newtheorem{lemma}[theorem]{Lemma}
\newtheorem{corollary}[theorem]{Corollary}
\newcommand{\Proof}[1]
        {
        \noindent
        \emph{Proof #1.}~
        }
\newsavebox{\smallProofsym}                     
\newcommand{\smalleop}[1]
        {
        \mbox{} \hfill #1~~\usebox{\smallProofsym}\!\!\!\!\!\!\
        }
\newcommand{\parag}[1]{\vspace{2mm}

\noindent{\bf #1} }
\newcommand{\NN}{\ensuremath{\mathbb N}}
\newcommand{\RR}{\ensuremath{\mathbb R}}
\newcommand{\pts}{\mathcal P}
\newcommand{\lines}{\mathcal L}
\newcommand{\GCD}{\mathrm{GCD}}
\def\eps{{\varepsilon}}
\begin{document}
\pagenumbering{arabic}

\title{The constant of point--line incidence constructions}

\author{
Martin Balko\thanks{Faculty of Mathematics and Physics, Charles University, Czech Republic. {\sl balko@kam.mff.cuni.cz}. Supported by the grant no. 21/32817S of the Czech Science Foundation (GA\v{C}R) and by the Center for Foundations of Modern Computer Science (Charles University project UNCE/SCI/004).
This article is part of a project that has received funding from the European Research Council (ERC) under the European Union's Horizon 2020 research and innovation programme (grant agreement No 810115).
}
\and
Adam Sheffer\thanks{Department of Mathematics, Baruch College, City University of New York, NY, USA.
{\sl adamsh@gmail.com}. Supported by NSF award DMS-1802059 and by PSCCUNY award 63580.}
\and
Ruiwen Tang\thanks{Stuyvesant High School. 345 Chambers St, New York, NY 10282.
{\sl ruiwentang@gmail.com}.}}

\date{}

\maketitle

\begin{abstract}
We study a lower bound for the constant of the Szemer\'edi--Trotter theorem. 
In particular, we show that a recent infinite family of point-line configurations satisfies $I(\pts,\lines)\ge (c+o(1)) |\pts|^{2/3}|\lines|^{2/3}$, with $c\approx 1.27$.
Our technique is based on studying a variety of properties of Euler's totient function. 
We also improve the current best constant for Elekes's construction from 1 to about 1.27. 
From an expository perspective, this is the first full analysis of the constant of Erd\H os's construction.
\end{abstract}

\section{Introduction}

In recent years, there has been significant interest in finding exact constants for incidence-related problems.
For example, Yu and Zhao \cite{YZ22} derived the constant of the joints problem, and Bukh and Chao \cite{BC21} derived the constant of the finite field Kakeya problem (building upon previous works \cite{DKSS13,SS08}). 
In this work, we explore the constant of what one may call the fundamental theorem of incidence theory: \emph{The Szemer\'edi--Trotter theorem}.

Throughout this paper we work in $\RR^2$.
Let $\pts$ be a finite set of points and let $\lines$ be a finite set of lines.
An \emph{incidence} is a point--line pair $(p,\ell)\in \pts\times \lines$ such that $p\in \ell$.
The number of incidences in $\pts\times \lines$ is denoted as $I(\pts,\lines)$.

\begin{theorem}[Szemer\'edi--Trotter] \label{th:SzemTrot}
Every set of points $\pts$ and set of lines $\lines$ satisfy that
\[ I(\pts,\lines) = O(|\pts|^{2/3}|\lines|^{2/3}+|\pts|+|\lines|). \]
\end{theorem}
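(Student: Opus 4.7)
The plan is to prove the theorem via Sz\'ekely's crossing-number argument, which gives the cleanest route. The key external tool is the crossing number inequality: any simple graph $G=(V,E)$ with $|E|\ge 4|V|$ admits no drawing in the plane with fewer than $|E|^3/(64|V|^2)$ edge crossings. This is a short probabilistic-deletion consequence of Euler's formula, so I would cite it rather than reprove it.

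First I would trim the configuration: discard every line of $\lines$ containing at most one point of $\pts$, which removes at most $|\lines|$ incidences. On each remaining line $\ell$, order its $k_\ell\ge 2$ points along $\ell$ and insert the $k_\ell-1$ edges joining consecutive points. This produces a graph $G$ on vertex set $\pts$ with
\[ |E(G)| \;=\; \sum_{\ell}(k_\ell-1) \;=\; I(\pts,\lines) - |\lines|. \]
Drawing every edge as the straight segment between its endpoints, two edges can cross only if their supporting lines do, so $\mathrm{cr}(G)\le \binom{|\lines|}{2}$.

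Now I would split into cases according to the size of $|E(G)|$. If $|E(G)|<4|\pts|$, then $I(\pts,\lines)<4|\pts|+|\lines|$ directly. Otherwise the crossing number inequality gives
\[ \frac{(I(\pts,\lines)-|\lines|)^3}{64|\pts|^2} \;\le\; \mathrm{cr}(G) \;\le\; \frac{|\lines|^2}{2}, \]
which rearranges to $I(\pts,\lines)\le |\lines|+\sqrt[3]{32}\,|\pts|^{2/3}|\lines|^{2/3}$. Combining the two cases yields the stated bound.

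The only nontrivial ingredient is the crossing number inequality itself; everything else is a one-line reduction. An alternative would be Szemer\'edi and Trotter's original cell decomposition, where a random sample of $r$ lines cuts the plane into roughly $r^2$ cells and one bounds incidences cell by cell. That route reaches the same asymptotic bound but with more bookkeeping and a much worse implicit constant, which is exactly the quantity the rest of the paper is about; so for a clean statement of Theorem~\ref{th:SzemTrot} I would prefer the Sz\'ekely approach and expect no real obstacle beyond quoting the crossing number lemma.
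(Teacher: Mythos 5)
Your proof is correct, but there is no paper proof to compare it against: the paper never proves Theorem~\ref{th:SzemTrot}. It is quoted as the classical background result whose constant the paper investigates, with the upper bound $I(\pts,\lines)\le 2.44|\pts|^{2/3}|\lines|^{2/3}$ cited from Ackerman (improving the $2.5$ of Pach--Radoi\v{c}i\'c--Tardos--T\'oth); the paper's own contribution is entirely on the lower-bound side. Your Sz\'ekely-style argument is the standard modern proof and goes through as written: the graph is simple (each edge determines a unique supporting line), no point of $\pts$ lies in the interior of an edge, two edges cross only at the intersection point of their distinct supporting lines so $\mathrm{cr}(G)\le\binom{|\lines|}{2}$, and your case split correctly handles the hypothesis $|E(G)|\ge 4|\pts|$ of the crossing lemma. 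Two small remarks. First, your line $|E(G)|=I(\pts,\lines)-|\lines|$ should be an inequality $|E(G)|\ge I(\pts,\lines)-|\lines|$: the sum $\sum_\ell(k_\ell-1)$ runs only over surviving lines, and each discarded line contributes at most one incidence; fortunately this is exactly the direction you need. Second, your argument yields the constant $\sqrt[3]{32}\approx 3.17$ on the main term, which is weaker than the $2.44$ the paper cites --- those refinements come precisely from improving the constant in the crossing-number inequality, which is where the quantitative content of the upper bound lives. So quoting the crossing lemma with $1/64$ is entirely adequate for the $O(\cdot)$ statement of Theorem~\ref{th:SzemTrot}, but anyone chasing the constant (as this paper does for the matching lower bound) must work with the sharpened crossing lemma instead.
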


Theorem~\ref{th:SzemTrot} has a large number of applications in a wide-variety of fields, including combinatorics, harmonic analysis, number theory, theoretical computer science, and model theory (for example, see \cite{BB15,Bourgain05,CGS20,Demeter14}).
This theorem has been known to be asymptotically tight since its appearance about 40 years ago. 
However, the exact constant remains unknown.
When $|\lines|=O(|\pts|^{1/2})$, the bound of Theorem~\ref{th:SzemTrot} is dominated by the term $|\pts|$.
When $|\lines|=\Omega(|\pts|^2)$, the bound of Theorem~\ref{th:SzemTrot} is dominated by the term $|\lines|$.
These are considered as less interesting extreme cases.
The main case is when\footnote{The notation $X = \omega(Y)$ means ``$X$ is asymptotically larger than $Y$.'' 
The notation $X = o(Y)$ means ``$X$ is asymptotically smaller than $Y$.'' } $|\lines|=\omega(|\pts|^{1/2})$ and $|\lines|=o(|\pts|^2)$, so the dominating term is $|\pts|^{2/3}|\lines|^{2/3}$.

The current best upper bound, taken from \cite{Ackerman19}, is $I(\pts,\lines)\le 2.44|\pts|^{2/3}|\lines|^{2/3}$ (this improves the previous bound of 2.5 from \cite{PRTT04}).
The status of the lower bound is not as simple. 
During the 20th century, only a single construction with $\Theta(|\pts|^{2/3}|\lines|^{2/3})$ incidences was known, due to Erd\H os (see Section \ref{sec:constructions}).
Pach and T\'oth \cite{PachToth97} analyzed Erd\H os's construction, to obtain $c|\pts|^{2/3}|\lines|^{2/3}$ incidences, where $c=(3/4\pi^2)^{1/3} \approx 0.42$.
Most of the steps of this analysis were omitted in the paper \cite{PachToth97}, making it difficult to verify. 
Twenty years later, Cibulka, Valtr, and the first author of the current work \cite{BCV19} spotted a mistake in the part of the analysis in \cite{PachToth97} that did appear on the paper. 
Fixing this mistake leads to the improved bound $c=3\cdot (3/4\pi^2)^{1/3} \approx 1.27$.
It remained unclear whether additional issues appear in the portion of the analysis that is omitted in \cite{PachToth97}.

In the early 2000s, Elekes \cite{Elekes2001} discovered a simpler point--line configuration that achieves 
$\Theta(|\pts|^{2/3}|\lines|^{2/3})$ incidences (see Section \ref{sec:constructions}).
The best constant previously obtained for this construction was by Apfelbaum \cite{Apfelbaum17}, who showed that $I(\pts,\lines)\ge |\pts|^{2/3}|\lines|^{2/3}$.
In other words, Apfelbaum obtained the constant $c=1$.

We set $n=|\pts|$.
In Erd\H os's construction, $\pts$ is a $\sqrt{n}\times \sqrt{n}$ section of the integer lattice.
In Elekes's construction, $\pts$ is a much taller and thinner section of the integer lattice. 
Recently, Silier and the second author of the current work \cite{SS21} discovered an infinite family of constructions with $\Theta(|\pts|^{2/3}|\lines|^{2/3})$ incidences.
In this family, the point set is an $n^\alpha \times n^{1-\alpha}$ section of the integer lattice, where $1/3 \le \alpha\le 1/2$.
Erd\H os's construction is obtained when $\alpha =1/2$ and Elekes's construction is obtained when $\alpha=1/3$.
For a description of this family, see Section \ref{sec:constructions}.
Previously, no work analyzed the constants of these constructions.

The following theorem is the main result of the current work.

\begin{theorem} \label{th:main}
Consider a construction $\pts,\lines$ from the infinite family with $1/3 \le \alpha \le 1/2$, $|\lines|= o(|\pts|^2)$, and $|\lines|= \omega(|\pts|^{2-3\alpha})$.
Then $I(\pts,\lines)=(c+o(1))|\pts|^{2/3}|\lines|^{2/3}$, where $c=3\cdot (3/4\pi^2)^{1/3} \approx 1.27$.
\end{theorem}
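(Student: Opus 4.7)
The approach is to express both $I(\pts,\lines)$ and $|\lines|$ as explicit leading-order functions of the auxiliary parameters that govern the construction, and then eliminate these parameters. The construction from \cite{SS21} takes $\pts$ to be the $A \times B$ section of the integer lattice with $A = n^\alpha$ and $B = n^{1-\alpha}$, and $\lines$ to consist of all lines through at least two points of $\pts$ whose slope is $p/q$ with $\gcd(p,q) = 1$ and $(p,q)$ in a parameter region $R = R(M,\alpha)$ whose size is controlled by a parameter $M$. We tune $M$ at the end so that $|\lines|$ equals the prescribed $L$.

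The first observation is that for every admissible slope $p/q$, each grid point of $\pts$ lies on exactly one line of that slope, so the slope contributes $n$ incidences minus a small correction for singleton lines (those containing only one grid point of $\pts$). Summing over slopes yields $I(\pts,\lines) = (1+o(1))\cdot n \cdot N_s$, where $N_s$ is the number of coprime pairs in $R$; by M\"obius inversion and the identity $\sum_d \mu(d)/d^2 = 6/\pi^2$, we have $N_s = (6/\pi^2)|R| + o(|R|)$.

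Counting lines uses a step-pair argument: a line of slope $(p,q)$ is \emph{rich} (contains $\ge 2$ grid points) exactly when a step $(q,p)$ from some grid point stays inside the grid, and the number of such step-pairs is $(A-q)(B-p)$. Since a rich line with $r$ grid points supplies $r-1$ pairs, the number of rich lines per slope equals $\sum r_i - (A-q)(B-p) \approx n - (A-q)(B-p) = pA + qB - pq$. Summing over coprime $(p,q) \in R$ and using M\"obius inversion together with the standard asymptotics $\sum_{q \le Q}\varphi(q) = (3/\pi^2)Q^2 + O(Q\log Q)$ and $\sum_{q \le Q} q\varphi(q) = (2/\pi^2)Q^3 + O(Q^2 \log Q)$ yields $|\lines|$ as a closed-form polynomial in the parameters of $R$. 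Solving $|\lines|=L$ for $M$ and substituting into the formula for $I$ gives $I = (c+o(1))\, n^{2/3}L^{2/3}$ with an explicit $c$.

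The main obstacle is performing this analysis uniformly in $\alpha \in [1/3,1/2]$: the region $R$ has different aspect ratios (roughly square for $\alpha=1/2$, highly elongated for $\alpha=1/3$), and the $pA$ and $qB$ terms play very different roles in the line count. The content of the theorem is that after all cancellations the leading constant is $\alpha$-independent. In view of the error in \cite{PachToth97} at $\alpha=1/2$, I would be especially careful with the lower-order corrections---singleton lines, the gap between $(A-q)(B-p)$ and $AB$, and slopes with $pA \approx qB$---to confirm that the leading constant is exactly $3(3/(4\pi^2))^{1/3}$. The hypothesis $|\lines| = \omega(n^{2-3\alpha})$ is what ensures $M$ is large enough for the main-term asymptotics to dominate all these corrections.
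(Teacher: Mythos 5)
There is a genuine gap, and it is the central idea of the construction: the richness threshold. The family from \cite{SS21} that Theorem~\ref{th:main} speaks about does not consist of all lines through at least two points of $\pts$ with slopes in a tunable region; it consists (in the form analyzed in Section~\ref{sec:MainProof}) of \emph{all} lines containing at least $\eps n^{\alpha}$ points of $\pts$, where $\eps=o(1)$, $\eps=\omega(n^{-\alpha})$ is the parameter that tunes $|\lines|$ against $|\pts|$. Your two-rich family is a different construction, so even a flawless execution of your plan would not prove the stated asymptotic equality for this family. Worse, it cannot produce the constant $c=3(3/4\pi^2)^{1/3}$ at all. Run your own scheme at $\alpha=1/2$: per coprime slope $(p,q)$ you keep $\approx (p+q)\sqrt{n}$ lines while gaining only $\approx n$ incidences, so with $R=[1,M]^2$ one gets $I\approx \frac{6}{\pi^2}M^2n$ and $|\lines|\approx \frac{6}{\pi^2}M^3\sqrt{n}$, giving $I/(|\pts|^{2/3}|\lines|^{2/3})\to (6/\pi^2)^{1/3}\approx 0.85$. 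Optimizing the shape of $R$ (the scale-invariant functional $\mathrm{area}(R)/(\int_R(p+q))^{2/3}$ is maximized by the sublevel set $\{p+q\le T\}$) and including both slope signs only lifts this to about $1.11$. The gap to $1.27$ is exactly the contribution of the poor lines: lines with between $2$ and $\eps n^{\alpha}$ points inflate $|\lines|$ substantially while adding few incidences, and discarding them is what the threshold does.

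A related symptom is your step ``each slope contributes $(1+o(1))\,n$ incidences.'' In the actual construction this is false at the level of the leading constant: discarding poor lines costs $rs\eps^2 n^{2\alpha}$ incidences per slope $-s/r$, which for the extreme slopes $r\approx 1/\eps$, $s\approx n^{1-2\alpha}/\eps$ is $\Theta(n)$, not $o(n)$; these losses survive into the final answer as the $-\frac{3n^{2-2\alpha}}{4\pi^2\eps^2}$ terms in \eqref{eq:IncL325} and \eqref{eq:IncL425}, and similarly the threshold-removed lines enter the line counts at leading order. The paper's proof therefore tracks, per slope, both the lines and the incidences lost to the threshold (via the bottom-rows/rightmost-columns decomposition into $\lines_{s/r}^*$ and $\hat{\lines}_{s/r}^*$), and then sums with the totient asymptotics of Lemma~\ref{le:totientProperties}; for the steep class $\lines_4$ when $\alpha<1/2$ it additionally needs the partial totient counts $\phi_m(n)=\frac{m}{n}\phi(n)+O(2^{w(n)})$ of Lemma~\ref{le:phiMN}/Corollary~\ref{co:phiMN} and the bound $\sum_{r\le m}2^{w(r)}=O(m\log\log m)$ of Lemma~\ref{le:sum2w(r)}, because the coprimality condition must be counted over ranges $rn^{1-2\alpha}<s\le n^{1-2\alpha}/\eps$ that are not full periods. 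Your M\"obius-inversion bookkeeping is sound as far as it goes, but without the richness threshold the elimination of parameters converges to a strictly smaller constant, so the proposal as written does not prove the theorem.
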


The condition $|\lines|= o(|\pts|^2)$ only means that we are in the main case, where the number of incidences is dominated by the term $|\pts|^{2/3}|\lines|^{2/3}$.
The condition $|\lines|= \omega(|\pts|^{2-3\alpha})$ is more restrictive when $\alpha<1/2$.
For example, when $\alpha=1/3$, this condition asks for the number of lines to be asymptotically larger than the number of points. 

We believe that Theorem~\ref{th:main} is of interest for several reasons:
\begin{itemize}[noitemsep,topsep=1pt]
\item Generalizing the analysis of Erd\H os's construction to the entire family is not trivial and requires multiple new ideas.
\item The constant of Elekes's construction is improved from 1 to about 1.27.
\item From an expository perspective, the proof from \cite{PachToth97} is now fully explained and verified. 
This clarifies some details, such as that an additional $+o(1)$ should appear in the bound.
\end{itemize}

Recently, Guth and Silier \cite{GS21} discovered another infinite family of constructions with $\Theta(|\pts|^{2/3}|\lines|^{2/3})$ incidences.
It may be interesting to analyze the constants of this family.  

In Section \ref{sec:constructions}, we describe the constructions of Erd\H os and Elekes, and the infinite family of constructions.
In Section \ref{sec:totient}, we derive a variety of results related to Euler's totient function, which are required in our analysis.
In Section \ref{sec:MainProof}, we prove Theorem~\ref{th:main}.

\section{The constructions} \label{sec:constructions}

The purpose of this section is to briefly provide intuition for Erd\H os's construction, Elekes's construction, and the infinite family of constructions from \cite{SS21}.
For simplicity and intuition, we only describe the case where $|\pts|=|\lines|$ and with simplified sets of lines.
These simpler sets do lead to $\Theta(|\pts|^{2/3}|\lines|^{2/3})$ incidences, but the constant hidden by the $O(\cdot)$-notation is not optimal. 
For example, all lines in this section have positive slopes. 
In Section \ref{sec:MainProof}, we consider the general case and sets of lines that optimize the constant.

For positive integers $s$ and $t$, we define $(s,t) = \GCD(s,t)$.
In other words, $(s,t)$ is the largest positive integer that divides both $s$ and $t$.

\begin{figure}[ht]
    \centering
    \begin{subfigure}[b]{0.25\textwidth}
    \centering
        \includegraphics[width=0.97\textwidth]{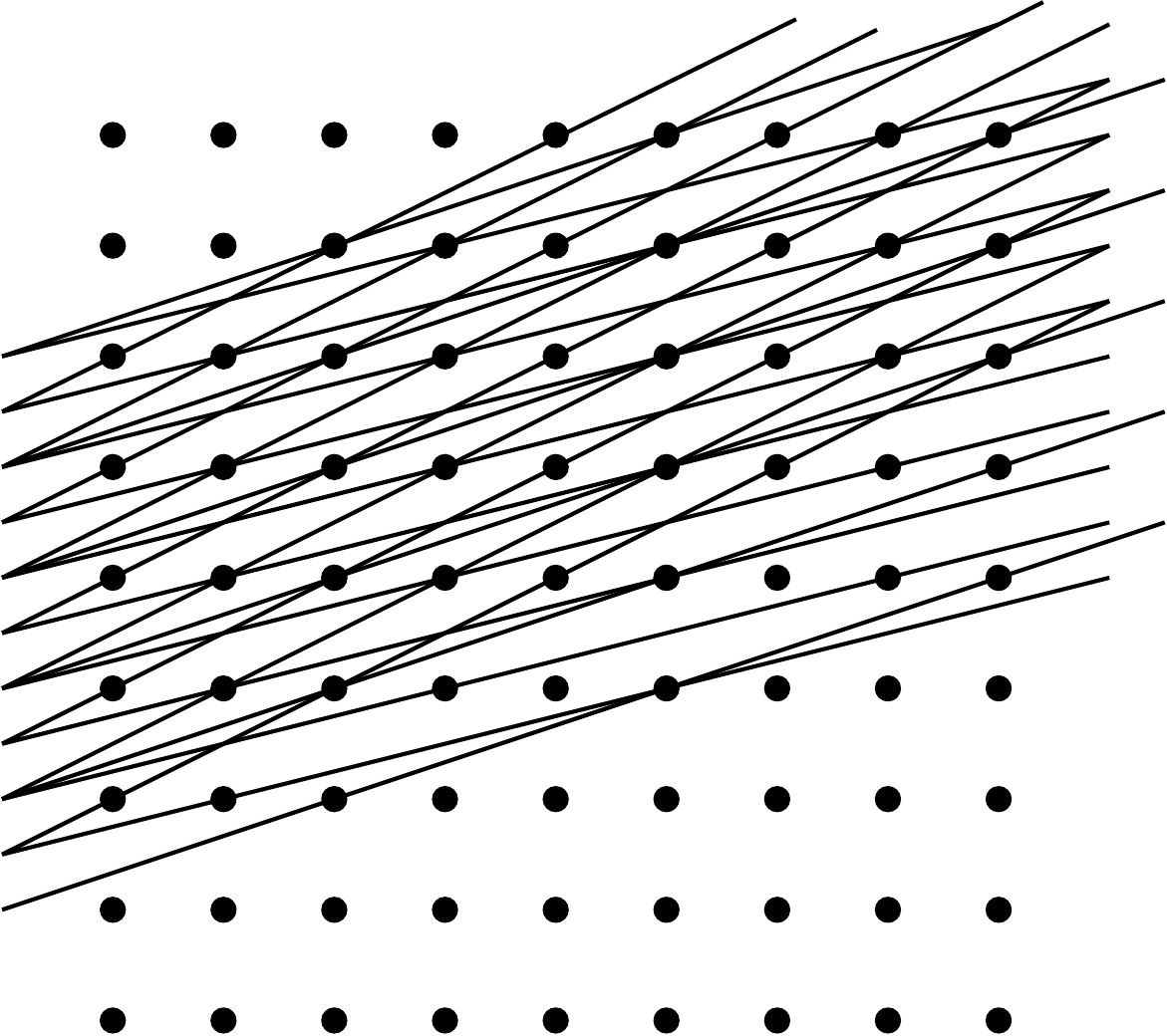}
    \end{subfigure}
    \hspace{1cm}
    \begin{subfigure}[b]{0.16\textwidth}
        \centering
        \includegraphics[width=\textwidth]{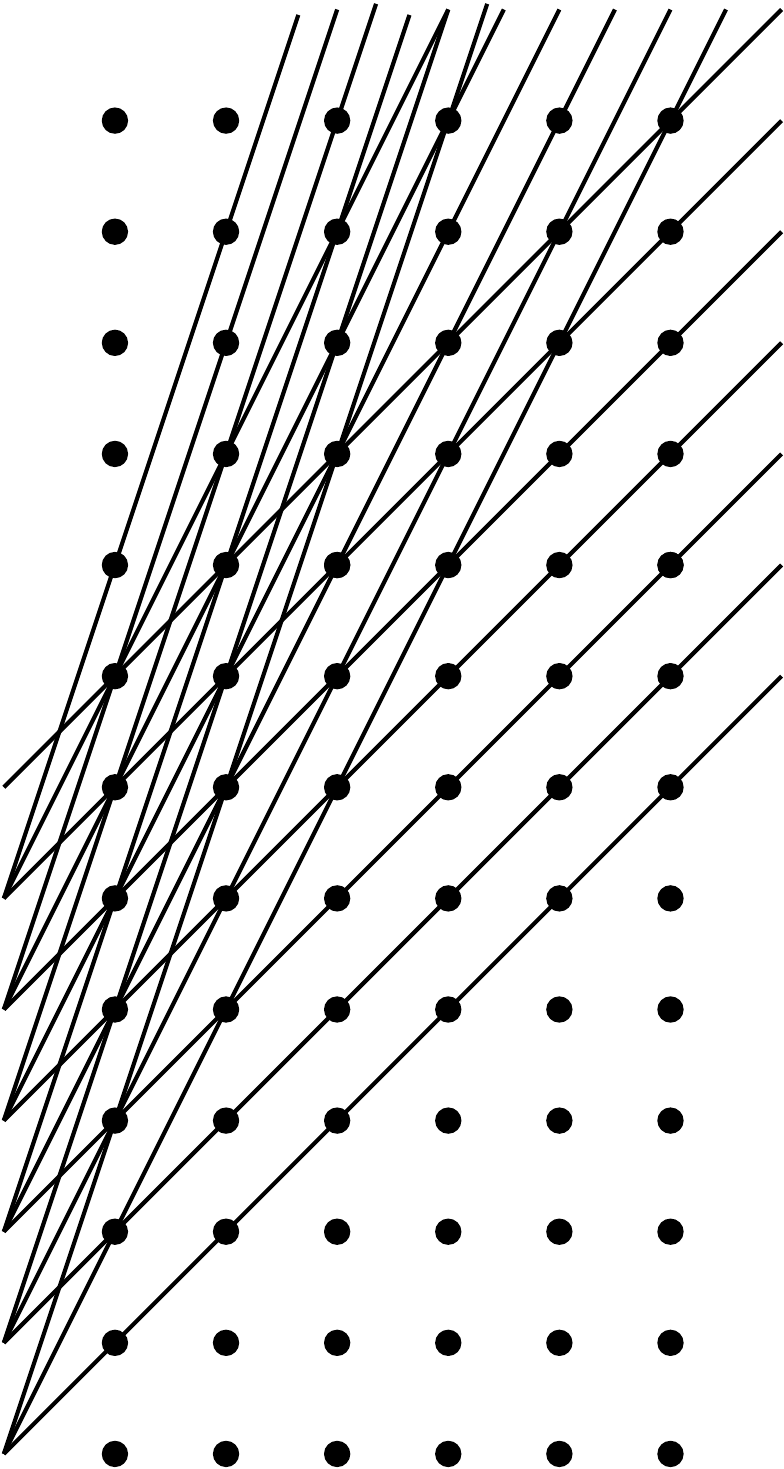}
    \end{subfigure}
    \vspace{2mm}
    \caption{Left: Erd\H os's construction. Right: Elekes's construction.}
    \label{fi:OriginalConstructions}
\end{figure}

\parag{Erd\H os's construction.} 
In Erd\H os's construction, the point set is a $\sqrt{n}\times\sqrt{n}$ section of the integer lattice:
\[ \pts = \{ (i,j) \in \NN^2\ :\ 1\le i,j \le \sqrt{n} \}. \]
The line slopes are rational numbers between 0 and 1, where the numerator and denominator are not too large.
In particular, the set of lines is
\begin{align*} 
\lines = \Big\{y=\frac{a}{b}(x-c)+d\ :\ a,b\in \NN,\ (a,b)=1,\ &1\le a < b \le n^{1/6}, \\
&1\le c\le b,\ \sqrt{n}/2 \le d \le 3\sqrt{n}/4 \Big\}. 
\end{align*}
After fixing the slope of a line (that is, fixing $a,b$), the parameters $c$ and $d$ define a family of $b\sqrt{n}/4$ lines with this slope. 
To see why $|\lines|\approx n$, we require tools that are introduced in Section \ref{sec:totient}.
This configuration is depicted in Figure \ref{fi:OriginalConstructions}.

\parag{Elekes's construction.} 
In Elekes's construction, the point set is a tall and thin section of the integer lattice:
\[ \pts = \{ (i,j) \in \NN^2\ :\ 1\le i \le n^{1/3},\ 1\le j \le n^{2/3} \}. \]
The line slopes are the integers between 1 and $n^{1/3}$.
For each slope, there are $n^{2/3}$ lines with that slope. 
In particular, the set of lines is
\[ \lines = \{ y=ax+b\ :\ a,b\in \NN,\ 1\le a \le n^{1/3},\ 1\le b \le n^{2/3}\}. \]
Unlike Erd\H os's construction, in this case it is easy to see that $|\lines| = n$.
This configuration is depicted in Figure \ref{fi:OriginalConstructions}.

\parag{The infinite family of constructions.} 
In Erd\H os's construction, the point set is a $\sqrt{n}\times \sqrt{n}$ section of the integer lattice.
In Elekes's construction, the point set is an $n^{1/3}\times n^{2/3}$ section of the integer lattice.
These are the two extreme cases of the following family.
We fix $1/3 < \alpha < 1/2$ and consider the point set
\[ \pts = \{ (i,j) \in \NN^2\ :\ 1\le i \le n^\alpha,\ 1\le j \le n^{1-\alpha} \}. \]

In Erd\H os's construction, the line slopes are rational numbers between 0 and 1. 
In Elekes's construction, the line slopes are integers between 1 and $n^{1/3}$.
We now mix these two cases, defining 
\begin{align*} 
\lines = \bigg\{y=&\left(a+\frac{b}{c}\right)(x-i)+d\ :\ a,b,c,d,i\in \NN,\ 0\le a < n^{1-2\alpha}/4, \\
&\hspace{10mm} 1\le b< c\le n^{\alpha-1/3},\ (b,c)=1,\ 0\le i < c,\ n^{1-\alpha}/2 \le d < 3n^{1-\alpha}/4\bigg\}.
\end{align*}
As $\alpha$ approaches $1/2$, this set of lines approaches the set from Erd\H os's construction.
As $\alpha$ approaches $1/3$, this set of lines approaches the set from Elekes's construction.
Thus, we can indeed consider the constructions of Erd\H os and Elekes as the endpoints of the infinite family of constructions.

\section{Preliminaries: Euler's totient function} \label{sec:totient}

For an integer $j>0$, we recall that \emph{Euler's totient function} of $j$ is
\[ \phi(j) = |\{ i\in \{1,2,\ldots,j-1\}\ :\ (i,j)=1 \}|. \]
Our analysis in Section \ref{sec:MainProof} heavily relies on properties of this totient function and of relatively prime integers.
In this section, we discuss these properties.
Throughout the paper, all logarithms are of base $e$.

\begin{lemma} \label{le:totientProperties}$\quad$\\[2mm]
(a) $\displaystyle \sum_{j=1}^n \phi(j) = \frac{3n^2}{\pi^2} +O(n\log n)$. \\[2mm]
(b) $\displaystyle \sum_{j=1}^n j\cdot \phi(j) = \frac{2n^3}{\pi^2} +O(n^2 \log n)$. \\[2mm]
(c) $\displaystyle \sum_{j=1}^n j^2\cdot \phi(j) = \frac{3n^4}{2\pi^2} +O(n^3 \log n)$. \\[2mm]
(d) $\displaystyle \sum_{j=1}^{n}\frac{\phi(j)}{j} = \frac{6n}{\pi^2} + O(\sqrt{n})$.
\end{lemma}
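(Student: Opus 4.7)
The plan is to derive all four parts from M\"obius inversion applied to Euler's totient function, together with the identity
\[ \sum_{d=1}^{\infty} \frac{\mu(d)}{d^2} \;=\; \frac{1}{\zeta(2)} \;=\; \frac{6}{\pi^2}, \]
and the standard tail estimate $\sum_{d>n}|\mu(d)|/d^2 = O(1/n)$. The common starting point is the identity $\phi(j) = \sum_{d\mid j}\mu(d)\cdot (j/d)$, obtained by M\"obius-inverting the classical $\sum_{d\mid j}\phi(d) = j$.

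For parts (a), (b), and (c), I would substitute this identity into $\sum_{j=1}^{n} j^k \phi(j)$ for $k=0,1,2$, exchange the order of summation, and reindex via $j=dm$ to obtain
\[ \sum_{j=1}^{n} j^k\,\phi(j) \;=\; \sum_{d=1}^n \mu(d)\, d^k \sum_{m=1}^{\lfloor n/d\rfloor} m^{k+1}. \]
The inner power sum equals $(n/d)^{k+2}/(k+2) + O((n/d)^{k+1})$. Pulling out the leading piece gives
\[ \frac{n^{k+2}}{k+2}\sum_{d=1}^n \frac{\mu(d)}{d^2} \;=\; \frac{n^{k+2}}{k+2}\cdot\frac{6}{\pi^2} + O(n^{k+1}), \]
after invoking the tail bound. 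For $k=0,1,2$ this yields the claimed constants $3/\pi^2$, $2/\pi^2$, and $3/(2\pi^2)$. The remainder from the power-sum formula contributes $O\bigl(n^{k+1}\sum_{d\le n}1/d\bigr)=O(n^{k+1}\log n)$, which dominates the tail-truncation error and matches the stated bounds exactly.

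For part (d), the same M\"obius substitution produces the cleaner expression
\[ \sum_{j=1}^n \frac{\phi(j)}{j} \;=\; \sum_{d=1}^n \frac{\mu(d)}{d}\Big\lfloor \frac{n}{d}\Big\rfloor \;=\; n\sum_{d=1}^n \frac{\mu(d)}{d^2} + O\Big(\sum_{d\le n}\frac{1}{d}\Big) \;=\; \frac{6n}{\pi^2} + O(\log n), \]
which is already stronger than the claimed $O(\sqrt{n})$. The underlying techniques here are routine analytic number theory, so I do not expect a real conceptual obstacle; the only point that needs attention is the bookkeeping of the two competing error sources in parts (a)--(c)---the power-sum remainder of order $n^{k+1}\log n$ and the tail-truncation remainder of order $n^{k+1}$---and verifying that the former dominates in each of the three cases.
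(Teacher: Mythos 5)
Your proposal is correct, but it takes a genuinely different route from the paper. The paper does not prove part (a) at all---it cites the classical formula from Hardy--Wright---and then derives (b) and (c) from (a) by an Abel-summation trick, writing
\[ \sum_{j=1}^n j\cdot\phi(j) = n\sum_{j=1}^n \phi(j) - \sum_{i=1}^{n-1}\sum_{j=1}^{i}\phi(j) \]
and substituting the asymptotics of (a) together with the closed forms for sums of squares and cubes (and analogously for (c) from (b)). For part (d) the paper again uses a summation-by-parts identity relating $\sum\phi(j)$ to $\sum\phi(j)/j$, and then runs a two-sided induction on $n$ to show the error is at most $C\sqrt{n}$, which is where their weaker $O(\sqrt n)$ bound comes from. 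You instead prove all four parts uniformly and from scratch via M\"obius inversion $\phi(j)=\sum_{d\mid j}\mu(d)\,(j/d)$, the reindexing $j=dm$, and the tail bound for $\sum_d \mu(d)/d^2 = 6/\pi^2$; your bookkeeping of the two error sources is accurate (the power-sum remainder $O(n^{k+1}\log n)$ does dominate the $O(n^{k+1})$ truncation error in each of $k=0,1,2$), and for (d) the identity $\sum_{j\le n}\phi(j)/j=\sum_{d\le n}\frac{\mu(d)}{d}\lfloor n/d\rfloor$ correctly yields $O(\log n)$, strictly stronger than the paper's $O(\sqrt n)$. What each approach buys: the paper's argument is elementary bootstrapping that never touches the M\"obius function (at the cost of importing (a) as a black box and settling for a lossy inductive error bound in (d)); your argument requires the standard M\"obius machinery but is self-contained, treats all four sums by one mechanism, and sharpens (d).
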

\begin{proof}
(a) This is a classic formula. For example, see \cite[Theorem 330]{HardyWright79}. \\[2mm]
(b) By relying on part (a) and on the formula for a sum of squares, we obtain that
\begin{align*}
\sum_{j=1}^n j\cdot \phi(j) &= n\cdot \sum_{j=1}^n \phi (j) - \left(\sum\limits_{j=1}^1 \phi (j) + \sum\limits_{j=1}^2 \phi (j) + \sum\limits_{j=1}^3 \phi (j) + \cdots + \sum\limits_{j=1}^{n-1} \phi (j)\right) \\[2mm]
&=\frac{3}{\pi^2}\cdot \left(n^3-\left(1^2+2^2+\cdots +(n-2)^2+(n-1)^2 \right)+ n\cdot O(n \log n)\right) \\[2mm]
&= \frac{3}{\pi^2}\cdot\left(n^3-\frac{(n-1)\cdot n\cdot (2n-1)}{6}\right) +O(n^2\log n)= \frac{2n^3}{\pi^2} + O(n^2 \log n).
\end{align*}
(c) By relying on part (b) and on the formula for a sum of cubes, we obtain that
\begin{align*}
\sum_{j=1}^n j^2\cdot \phi(j) &= n\cdot \sum_{j=1}^n j\cdot \phi (j) - \left(\sum\limits_{j=1}^1 j\cdot\phi (j) + \sum\limits_{j=1}^2 j\cdot\phi (j) + \sum\limits_{j=1}^3 j\cdot\phi (j) + \cdots + \sum\limits_{j=1}^{n-1} j\cdot\phi (j)\right) \\[2mm]
&=\frac{2}{\pi^2}\cdot \left(n^4-\left(1^3+2^3+\cdots +(n-2)^3+(n-1)^3 \right) + n\cdot O(n^2 \log n) \right) \\[2mm]
&= \frac{2}{\pi^2}\cdot\left(n^4-\frac{n^2\cdot (n-1)^2}{4}\right) +O(n^3\log n)= \frac{3n^4}{2\pi^2} + O(n^3 \log n).
\end{align*}
(d) We note that 
\begin{align*}
\sum\limits_{j=1}^{n}\phi(j) = n\cdot \sum\limits_{j=1}^{n}\frac{\phi(j)}{j} - \sum_{i=1}^{n-1}\sum_{j=1}^i \frac{\phi(j)}{j}.
\end{align*}
Combining this with part (a), dividing by $n$, and rearranging, leads to 
\begin{equation} \label{eq:r/phi(r)aux}
\sum_{j=1}^{n}\frac{\phi(j)}{j} = \frac{3n}{\pi^2} +O(\log n) + \frac{1}{n}\cdot\sum_{i=1}^{n-1}\sum_{j=1}^i \frac{\phi(j)}{j}.
\end{equation}

We now prove by induction on $n$ that $\sum_{j=1}^{n}\frac{\phi(j)}{j} \le \frac{6n}{\pi^2} + C\cdot \sqrt{n}$, where $C$ is a sufficiently large constant.
For the induction basis, the case where $n\le 100$ holds when $C$ is sufficiently large. 
For the induction step, consider $n>100$ and assume that the claim holds for every smaller value of $n$.
Combining this assumption with \eqref{eq:r/phi(r)aux} implies that 
\begin{align*}
\sum_{j=1}^{n}\frac{\phi(j)}{j} &\le \frac{3n}{\pi^2} +O(\log n) + \frac{1}{n}\cdot\sum_{i=1}^{n-1}\left(\frac{6i}{\pi^2} + C\cdot \sqrt{i}\right)\\[2mm]
&=\frac{3n}{\pi^2} +O(\log n) + \frac{1}{n}\cdot\left(\frac{6n(n-1)}{2\pi^2} + C\cdot\sum_{i=1}^{n-1}\sqrt{i}\right)\\[2mm]
&= \frac{3n}{\pi^2} +O(\log n) + \frac{3(n-1)}{\pi^2} + \frac{C}{n}\cdot \sum_{i=1}^{n-1}\sqrt{i}.
\end{align*}

Standard integration shows that $\sum_{i=1}^{n}\sqrt{i} = 2n^{3/2}/3 + O(n)$.
Combining this with the above leads to   
\begin{align*}
\sum_{j=1}^{n}\frac{\phi(j)}{j} &< \frac{6n}{\pi^2} +O(\log n) + \frac{C}{n}\cdot \left( 2n^{3/2}/3 + O(n)\right) = \frac{6n}{\pi^2} + \frac{2Cn^{1/2}}{3} +O(C+\log n).
\end{align*}
When $C$ is sufficiently large with respect to the constants hidden by the $O(\cdot)$-notation, we have that $ \frac{2Cn^{1/2}}{3} + O(C+\log n) < Cn^{1/2}$.
This completes the induction step.
A symmetric inductive argument implies that $\sum_{j=1}^{n}\frac{\phi(j)}{j} \ge \frac{6n}{\pi^2} - D\cdot \sqrt{n}$, where $D$ is a sufficiently large constant.
\end{proof}

Studying $\phi(j)$ tells us how many numbers are relatively prime to $j$, between 1 and $j$.
Sometimes, we are not interested in the amount of relatively prime numbers, but rather in the sum of these numbers.

\begin{lemma} \label{le:TotientPastN}
$\quad$ \\[2mm]
(a) $\displaystyle \sum_{s<n \atop (s,n)=1} s = \frac{n\cdot \phi(n)}{2}$. \\[2mm]
(b) For a positive integer $k$, the number of integers between 1 and $nk$ that are relatively prime to $n$ is $k\cdot \phi(n)$. \\[2mm]
(c) Every positive integer $k$ satisfies that $\displaystyle \sum_{s<kn \atop (s,n)=1} s = k^2\cdot \frac{n\cdot \phi(n)}{2}$.
\end{lemma}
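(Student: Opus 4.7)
For part (a), my plan is to exploit the involution $s\mapsto n-s$ on the set $S_n=\{s:1\le s<n,\ (s,n)=1\}$. Since $(n-s,n)=(s,n)=1$ and $n-s\in\{1,\ldots,n-1\}$, this map sends $S_n$ bijectively to itself. Hence the target sum $T:=\sum_{s\in S_n}s$ also equals $\sum_{s\in S_n}(n-s)=n\phi(n)-T$, and solving gives $T=n\phi(n)/2$ in two lines.

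For part (b), my plan is to use the periodicity of $\gcd$: $(m+n,n)=(m,n)$. I would partition $\{1,\ldots,kn\}$ into the $k$ consecutive length-$n$ blocks $B_j=\{jn+1,\ldots,(j+1)n\}$ for $j=0,\ldots,k-1$. Translation by $jn$ is a bijection between the elements of $B_j$ coprime to $n$ and the elements of $B_0=\{1,\ldots,n\}$ coprime to $n$, of which there are exactly $\phi(n)$ (the only element of $B_0$ not in $S_n$ is $n$ itself, and $(n,n)=n\ne 1$ for $n\ge 2$). Summing over the $k$ blocks gives $k\phi(n)$.

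For part (c), my plan is to combine (a) and (b). Each integer $s$ with $1\le s<kn$ and $(s,n)=1$ can be written uniquely as $s=jn+t$ with $j\in\{0,\ldots,k-1\}$ and $t\in S_n$, so the target sum decomposes as
\[ \sum_{j=0}^{k-1}\Bigl(jn\cdot\phi(n)+\sum_{t\in S_n}t\Bigr), \]
which by part (a) and $\sum_{j=0}^{k-1}j=k(k-1)/2$ collapses to $k^2 n\phi(n)/2$ after a short rearrangement.

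I do not anticipate any substantial obstacle here: the entire lemma rests on the involution used in (a) together with the translation-invariance of $\gcd$ used in (b), and the only care needed is the handling of the right endpoint $s=kn$ in parts (b) and (c), which is never coprime to $n$ for $n\ge 2$ and therefore drops out of both the count and the sum automatically.
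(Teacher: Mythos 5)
Your proposal is correct and follows essentially the same route as the paper: the involution $s\mapsto n-s$ for (a), the periodicity $(m+n,n)=(m,n)$ with length-$n$ blocks for (b), and the block decomposition $s=jn+t$ for (c), which matches the paper's shift-by-$(j-1)n$ argument after reindexing. The only (cosmetic) difference is that in (a) you solve $T=n\phi(n)-T$ algebraically rather than literally pairing elements, which incidentally handles the fixed point of the involution at $n=2$ more cleanly than the paper's phrasing.
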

\begin{proof}
(a) We note that $(s,n)=1$ if and only if $(n-s,n)=1$.
By pairing up $s$ with $n-s$, we obtain $\phi(n)/2$ pairs of numbers that sum up to $n$.
The sum of all numbers from all pairs is $n\cdot\phi(n)/2$. \\[2mm]
(b) Every $1< i< n$ satisfies $(i,n)=(i+n,n)$.
In particular, $(i,n)=1$ if and only if $(n,i+n)=1$.
This implies that, for every $j\in \NN$, the set $\{jn+1,jn+2,\ldots,jn+n\}$ contains exactly $\phi(n)$ numbers that are relatively prime to $n$.  
This immediately leads to the assertion of part (b). \\[2mm]
(c) By part (b) of the current lemma, the number of integers between $(j-1)n$ and $jn$ that are relatively prime to $n$ is $jn\cdot \phi(n)-(j-1)n \cdot \phi(n) = \phi(n)$. 
Each of these integers is larger than its corresponding element from part (a) by $(j-1)n$.
This leads to
\[ \sum_{(j-1)n<s<jn \atop (s,n)=1} s = \frac{n\cdot \phi(n)}{2} + (j-1) n \cdot \phi(n) = \frac{(2j-1)\cdot n\cdot \phi(n)}{2}. \]

We conclude that
\begin{align*}
\sum_{s<kn \atop (s,n)=1} s &= \sum_{j=1}^{k}\sum_{(j-1)n<s<jn \atop (s,n)=1} s = \sum_{j=1}^{k} \frac{(2j-1)\cdot n\cdot \phi(n)}{2} \\
&= \frac{n\cdot \phi(n)}{2} \cdot \sum_{j=1}^{k} (2j-1) = \frac{n\cdot \phi(n)}{2} \cdot \frac{k(1+(2k-1))}{2} = k^2 \cdot \frac{n\cdot \phi(n)}{2}.
\end{align*}
\end{proof}

We also study numbers that are relatively prime to $n$ in sets other than $\{1,2,3,\ldots,n\}$.
For integers $1 < m \leq n$, we define 
\[ \phi_m (n) = |\{1\le a \le m\ :\ (a, n) = 1\}|. \]
For a non-negative $x \in \RR$, let $\langle x \rangle = x - \lfloor x \rfloor$. 
In other words, $\langle x \rangle$ is the part of $x$ to the right of the decimal point.
A statement of the form $a|b$ means ``$a$ divides $b$.''
For a positive integer $n$, we let $w(n)$ denote the number of prime divisors of $n$.

\begin{lemma} \label{le:phiMN}
Any integers $1< m\le n$ satisfy that
\[ \phi_m(n) = \frac{m}{n}\cdot \phi(n) + O(2^{w(n)}).  \]
\end{lemma}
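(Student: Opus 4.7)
The plan is to use Möbius inclusion--exclusion to write both $\phi_m(n)$ and $\phi(n)$ as sums over divisors of $n$, and then observe that the difference reduces to a sum of fractional parts indexed by squarefree divisors, whose count is exactly $2^{w(n)}$.

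First I would use the standard identity $\sum_{d \mid k} \mu(d) = [k=1]$. Applying this with $k=(a,n)$, we can write
\[ \phi_m(n) = \sum_{a=1}^{m} [(a,n)=1] = \sum_{a=1}^{m} \sum_{d \mid (a,n)} \mu(d) = \sum_{d \mid n} \mu(d) \left\lfloor \frac{m}{d} \right\rfloor. \]
Since $\phi(n) = \phi_n(n)$, the same identity applied with $m=n$ gives the cleaner formula $\phi(n) = \sum_{d \mid n} \mu(d) \cdot (n/d)$, with no floors needed because $d \mid n$.

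Next I would subtract $(m/n)\phi(n)$ from $\phi_m(n)$ to obtain
\[ \phi_m(n) - \frac{m}{n}\,\phi(n) = \sum_{d \mid n} \mu(d)\left( \left\lfloor \frac{m}{d} \right\rfloor - \frac{m}{d} \right) = -\sum_{d \mid n} \mu(d)\, \left\langle \frac{m}{d} \right\rangle. \]
Each summand has absolute value less than $1$, and $\mu(d)$ is nonzero only when $d$ is squarefree. The squarefree divisors of $n$ are in bijection with subsets of the prime divisors of $n$, so there are exactly $2^{w(n)}$ of them. Thus
\[ \left| \phi_m(n) - \frac{m}{n}\,\phi(n) \right| \le 2^{w(n)}, \]
which is the desired estimate.

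There is no real obstacle here; the only small subtlety is to be careful that $\phi(n)$, as defined in the paper, counts integers in $\{1,\dots,n-1\}$ coprime to $n$, but for $n>1$ the integer $n$ itself is not coprime to $n$, so the two definitions of $\phi(n)$ (with or without including $n$) coincide, and the identity $\phi(n)=\sum_{d\mid n}\mu(d)(n/d)$ applies directly. Everything else is routine Möbius inversion.
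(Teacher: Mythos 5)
Your proof is correct and is essentially the paper's own argument in different notation: the paper expands $\phi_m(n)$ by inclusion--exclusion over subsets $S$ of the prime divisors of $n$ with $q_S=\prod_{i\in S}p_i$, which is exactly your M\"obius sum $\sum_{d\mid n}\mu(d)\lfloor m/d\rfloor$ restricted to squarefree $d$, and in both cases the main term $\frac{m}{n}\phi(n)$ comes from the exact fractions while the error is a signed sum of fractional parts bounded by the number of squarefree divisors, $2^{w(n)}$. Your handling of the $a=n$ endpoint in the definition of $\phi(n)$ is a valid (if minor) point, and your explicit two-sided bound $\bigl|\phi_m(n)-\frac{m}{n}\phi(n)\bigr|\le 2^{w(n)}$ matches the paper's conclusion up to an immaterial factor of $2$.
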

\begin{proof}

We write $w=w(n)$ and let the prime factorization of $n$ be 
\[ n = {p_1}^{l_1} \cdot {p_2}^{l_2} \cdot ... \cdot {p_w}^{l_w}. \]

For an integer $i$, we set $A_i = \{1\le a \le m\ :\ i | a\}$. 
Note that $|A_i| = \lfloor\frac{m}{i}\rfloor$.
Since 0 does not divide any nonzero integer, we have that $A_0 = \emptyset$. 
Since 1 divides every integer, we have that $A_1 = \{1,2,\ldots,m\}$.
For an integer $1\le k \le w$, let ${[w] \choose k}$ be the set of subsets of $\{1,2,\ldots,w\}$ of size $k$.
By the inclusion-exclusion principle,
\[ \phi_m(n) = m - |\cup_{i=1}^{w}A_{p_i}| = m + \sum\limits_{k=1}^{w}(-1)^{k}\cdot \sum_{S \in {[w] \choose k}}|\cap_{i\in S} A_{p_i}|. \]

For a fixed $k$ and $S \in {[w] \choose k}$, we set $q_S = \prod_{i\in S} p_i$.
Note that  $\cap_{i\in S} A_{p_i} = A_{q_S}$.
We may thus rewrite the above as
\[ \phi_m(n) = m +\sum\limits_{k=1}^w (-1)^{k} \cdot \sum\limits_{S \in {[w] \choose k}} |A_{q_S}| = m +\sum\limits_{k=1}^w (-1)^{k} \cdot \sum\limits_{S \in {[w] \choose k}} \left\lfloor\frac{m}{q_S}\right\rfloor. \]

We define ${[w] \choose 0}=\{\emptyset\}$ and $q_\emptyset=1$, which allows us to rewrite the above as
\[ \phi_m(n) =  \sum\limits_{k=0}^w (-1)^{k} \cdot \sum\limits_{S \in {[w] \choose k}} \left\lfloor\frac{m}{q_S}\right\rfloor = \sum\limits_{k=0}^w (-1)^{k} \cdot \sum\limits_{S \in {[w] \choose k}} \frac{m}{q_S} - \sum\limits_{k=0}^w (-1)^{k} \cdot \sum\limits_{S \in {[w] \choose k}} \left\langle\frac{m}{q_S}\right\rangle. \]
The first term on the right-hand side is $m\cdot(1-\frac{1}{p_1})\cdots(1-\frac{1}{p_w})$. The second term is the decimal part that was removed by the floors. 
Next, we note that
\begin{align*} 
\phi_m(n) &= \sum\limits_{k=0}^w (-1)^{k} \sum\limits_{S \in {[w] \choose k}} \frac{m}{q_S} - \sum\limits_{k=1}^w (-1)^{k} \sum\limits_{S \in {[w] \choose k}} \left\langle\frac{m}{q_S}\right\rangle \\
&\leq m\left(1-\frac{1}{p_1}\right)\cdots\left(1-\frac{1}{p_w}\right) + \sum\limits_{1\le k \le w \atop k \text{ odd}} \sum\limits_{S \in {[w] \choose k}} \left\langle\frac{m}{q_S}\right\rangle.
\end{align*}

By definition, every term of the form $\langle m/q_S\rangle$ is smaller than 1. 
The number of subset of $\{1,2,\ldots,w\}$ of an odd size is $2^{w-1}$.
We thus obtain that 
\[\phi_m(n) < m\left(1-\frac{1}{p_1}\right)\cdots\left(1-\frac{1}{p_w}\right) + 2^{w-1}.\]

By another inclusion-exclusion principle, we have that
\[ \phi(n) = n\left(1-\frac{1}{p_1}\right)\left(1-\frac{1}{p_2}\right)\cdots\left(1-\frac{1}{p_w}\right). \]
Thus, 
\begin{equation*} 
\phi_m(n) < \frac{m}{n}\cdot \phi(n) + 2^{w-1}.
\end{equation*}

A symmetric argument with the even values of $k$ implies that
\begin{equation*} 
\phi_m(n) > \frac{m}{n}\cdot \phi(n) - 2^{w-1}.
\end{equation*}
\end{proof}

We next generalize Lemma~\ref{le:phiMN} to every value of $m$.

\begin{corollary} \label{co:phiMN}
Any integers $m,n >1$ satisfy that
\[ \phi_m(n) = \phi(n)\cdot \frac{m}{n} + O(2^{w(n)}).  \]
\end{corollary}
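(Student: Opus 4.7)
The plan is to reduce the statement to Lemma~\ref{le:phiMN}, which already handles the range $1 < m \le n$, by writing $m$ in the form $m = kn + r$ with $0 \le r < n$ via Euclidean division. The interval $\{1,2,\ldots,m\}$ then splits naturally into a block $\{1,2,\ldots,kn\}$ of length $kn$ and a short leftover tail $\{kn+1,\ldots,kn+r\}$ of length $r$.

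For the long block, Lemma~\ref{le:TotientPastN}(b) says that the number of integers in $\{1,\ldots,kn\}$ relatively prime to $n$ is exactly $k\cdot\phi(n)$. For the short tail, the key observation (used in the proof of Lemma~\ref{le:TotientPastN}(b)) that $(a,n) = (a+kn, n)$ gives a bijection between integers in $\{kn+1,\ldots,kn+r\}$ coprime to $n$ and integers in $\{1,\ldots,r\}$ coprime to $n$. So the tail contributes exactly $\phi_r(n)$, and we have
\[ \phi_m(n) = k\cdot\phi(n) + \phi_r(n). \]

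To finish, I would bound $\phi_r(n)$. When $r \ge 2$, Lemma~\ref{le:phiMN} applies directly and yields $\phi_r(n) = \tfrac{r}{n}\phi(n) + O(2^{w(n)})$. The degenerate cases $r = 0$ and $r = 1$ give $\phi_r(n) \in \{0,1\}$, which is trivially $\tfrac{r}{n}\phi(n) + O(1)$ and hence also $\tfrac{r}{n}\phi(n) + O(2^{w(n)})$. Substituting back,
\[ \phi_m(n) = k\cdot\phi(n) + \frac{r}{n}\phi(n) + O(2^{w(n)}) = \frac{kn+r}{n}\cdot \phi(n) + O(2^{w(n)}) = \frac{m}{n}\cdot \phi(n) + O(2^{w(n)}), \]
which is the desired identity.

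There is no real obstacle here; the only point that needs a moment of care is verifying that Lemma~\ref{le:phiMN} can be invoked (its hypothesis $1 < m \le n$ becomes $1 < r \le n$, which fails for $r\in\{0,1\}$), but the degenerate cases are harmless and absorbed into the error term.
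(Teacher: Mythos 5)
Your proof is correct and takes essentially the same route as the paper: both split $\{1,\ldots,m\}$ at $n\lfloor m/n\rfloor$, count the full periods exactly via Lemma~\ref{le:TotientPastN}(b), use the periodicity $(a,n)=(a+kn,n)$ to identify the tail with $\{1,\ldots,r\}$, and then apply Lemma~\ref{le:phiMN} to the tail. Your explicit treatment of the degenerate tail lengths $r\in\{0,1\}$, where the hypothesis $1<m\le n$ of Lemma~\ref{le:phiMN} fails, is a minor point of extra rigor that the paper glosses over, but the argument is the same.
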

\begin{proof}
By Lemma~\ref{le:TotientPastN}(b), the number of integers relatively prime to $n$ in $\{1,\ldots,n\cdot \lfloor m/n\rfloor\}$ is $\phi(n)\cdot \lfloor m/n\rfloor$.
By the proof Lemma~\ref{le:TotientPastN}(b), the number of integers relatively prime to $n$ in $\{n\cdot \lfloor m/n\rfloor+1, \ldots, m\}$ is the same as in $\{1,\ldots,\langle m/n \rangle \cdot n\}$.
Combining this with Lemma~\ref{le:phiMN} implies that
\begin{align*} 
\phi_m(n) = \phi(n)\cdot \lfloor m/n\rfloor + \phi_{\langle m/n \rangle \cdot n}(n) &< \phi(n)\cdot \lfloor m/n\rfloor + \phi(n)\cdot \langle m/n \rangle + O(2^{w(n)}) \\[2mm]
&= \phi(n)\cdot \frac{m}{n} + O(2^{w(n)}). 
\end{align*}
\end{proof}

When applying Lemma~\ref{le:phiMN} and Corollary~\ref{co:phiMN}, we get expressions of the form $2^{w(r)}$.
We require the following result to sum up such expressions.

\begin{lemma} \label{le:sum2w(r)}
Every positive integer $m$ satisfies the following: \\
(a) $\displaystyle \sum_{r=2}^m 2^{w(r)} = O(m\log\log m)$.\\
(b) $\displaystyle \sum_{r=2}^m r 2^{w(r)} = O(m^2\log\log m)$.
\end{lemma}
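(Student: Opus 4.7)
The approach rests on the elementary identity $2^{w(r)} = \sum_{d\mid r}\mu^2(d)$, which holds because $2^{w(r)}$ is exactly the number of squarefree divisors of $r$. Substituting this into each target sum and swapping the order of summation converts both parts into weighted sums over squarefree $d \le m$:
\[
\sum_{r=2}^{m} 2^{w(r)} \;=\; \sum_{d=1}^{m}\mu^2(d)\,\bigl\lfloor m/d\bigr\rfloor + O(1) \;\le\; m\sum_{d\le m}\frac{\mu^2(d)}{d} + O(1),
\]
and analogously, evaluating the inner arithmetic progression,
\[
\sum_{r=2}^{m} r\cdot 2^{w(r)} \;=\; \sum_{d=1}^{m}\mu^2(d)\cdot d\cdot\frac{\lfloor m/d\rfloor\bigl(\lfloor m/d\rfloor+1\bigr)}{2} \;\le\; \frac{m^2}{2}\sum_{d\le m}\frac{\mu^2(d)}{d} + O(m^2),
\]
where the $O(m^2)$ absorbs the linear part of the triangular number as well as a boundary term controlled by $\sum_{d\le m}\mu^2(d)=O(m)$.

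Both parts therefore reduce to bounding the single partial sum $S(m):=\sum_{d\le m}\mu^2(d)/d$. The plan is to bound $S(m)$ by exploiting the structure of $\mu^2$: decompose the squarefree $d\le m$ according to $w(d)$, use the Hardy--Ramanujan concentration of $w(d)$ around $\log\log m$ to show that only ``typical'' $d$ contribute significantly, and combine this with a Mertens-type estimate on the partial Euler product $\prod_{p\le m}(1+1/p)$ to extract the claimed double-logarithmic factor. Plugging the resulting bound on $S(m)$ back into the two displays gives the desired $O(m\log\log m)$ and $O(m^2\log\log m)$ estimates.

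The main obstacle is precisely the sharp bound on $S(m)$: the naive estimate $S(m) \le \sum_{d\le m} 1/d = O(\log m)$, obtained by discarding $\mu^2$, only yields $O(m\log m)$ and $O(m^2\log m)$ respectively. Improving the $\log m$ factor to $\log\log m$ requires genuine input about the distribution of squarefree integers with few prime factors; the Dirichlet swap alone is not enough. The reduction step itself, by contrast, is routine—it is the analytic estimate on $S(m)$ that does all the work.
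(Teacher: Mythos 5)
Your reduction is sound as far as it goes: the identity $2^{w(r)}=\sum_{d\mid r}\mu^2(d)$ and the interchange of summation are correct, and both parts do reduce to the partial sum $S(m)=\sum_{d\le m}\mu^2(d)/d$. The fatal gap is the deferred estimate $S(m)=O(\log\log m)$: it is false. Squarefree numbers have positive density, $\sum_{d\le x}\mu^2(d)=\frac{6}{\pi^2}x+O(\sqrt{x})$, and partial summation gives $S(m)=\frac{6}{\pi^2}\log m+O(1)$; equivalently, $\sum_{d\ge 1}\mu^2(d)\,d^{-s}=\zeta(s)/\zeta(2s)$ has a simple pole at $s=1$. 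Neither ingredient you propose can repair this: Hardy--Ramanujan concentration of $w(d)$ near $\log\log m$ does not thin out the squarefree $d$ (squarefree integers already have density $6/\pi^2$), and Mertens gives $\prod_{p\le m}(1+1/p)=\Theta(\log m)$ --- a single logarithm, not a double one. In fact your own first display, read from below rather than above, refutes the lemma as stated: for $d\le m/2$ one has $\lfloor m/d\rfloor\ge m/(2d)$, so $\sum_{r\le m}2^{w(r)}\ge\frac{m}{2}\sum_{d\le m/2}\mu^2(d)/d=\Omega(m\log m)$, and classically $\sum_{r\le m}2^{w(r)}\sim\frac{6}{\pi^2}\,m\log m$, which is $\omega(m\log\log m)$. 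Part (b) fails for the same reason, its true order being $\Theta(m^2\log m)$.

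For comparison, the paper's proof takes a different route: it establishes $\sum_{r\le m}w(r)=m\log\log m+O(m)$ from Euler's estimate $\sum_{p\le m}1/p=\log\log m+O(1)$, and then converts this into a bound on $\sum_r 2^{w(r)}$ by a convexity argument under the constraint $w(r)=O(\log m/\log\log m)$. That conversion is where the paper itself slips: the extremal configuration contributes $\Theta\bigl(\frac{m(\log\log m)^2}{\log m}\cdot 2^{c\log m/\log\log m}\bigr)$, and $2^{c\log m/\log\log m}=m^{c\ln 2/\log\log m}$ is superpolylogarithmic, whereas the paper's final computation substitutes the value $\frac{\log m}{\log\log m}$ --- i.e., it evaluates $2^{w(r)}$ as if it were $w(r)$. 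So your dead end is diagnostic rather than a defect of your method: the lemma must be weakened to $O(m\log m)$ and $O(m^2\log m)$, and your reduction proves exactly these bounds immediately via the naive estimate $S(m)=O(\log m)$. The weaker bounds suffice everywhere the lemma is invoked, since in \eqref{eq:LinesL425} and \eqref{eq:IncL425} the relevant error terms carry factors $n^{1-\alpha}$ and $n$ with $\alpha<1/2$, and $\eps\log(1/\eps)=o(1)$, so the conclusion of Theorem~\ref{th:main} is unaffected.
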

\begin{proof}
(a) Euler's famous formula states that 
\[ \sum_{p\le m \atop p \text{ is prime}} \frac{1}{p} = \log \log m + O(1). \]
Let $\pi(m)$ be the number of primes smaller or equal to $m$. 
The above implies that
\begin{align*}
\frac{1}{m}\sum_{r=2}^m w(r) &= \frac{1}{m}\sum_{r=2}^m \sum_{p | r \atop p \text{ prime}} 1= \frac{1}{m}\sum_{2\le p \le m \atop p \text{ prime}} \sum_{2\le r \le m \atop p|r} 1 = \frac{1}{m}\sum_{2\le p \le m \atop p \text{ prime}}\left\lfloor\frac{m}{p}\right\rfloor \\[2mm]
&=\frac{1}{m}\sum_{2\le p \le m \atop p \text{ prime}}\left(\frac{m}{p}+O(1)\right) =\frac{1}{m}\sum_{2\le p \le m \atop p \text{ prime}}\frac{m}{p}+O\left(\frac{1}{m}\sum_{2\le p \le m \atop p \text{ prime}}1\right) \\[2mm]
&= \sum_{p\le m \atop p \text{ prime}} \frac{1}{p} + O\left(\frac{\pi(m)}{m}\right) =\log \log m + O(1).
\end{align*}
We may rearrange this as
\begin{equation} \label{eq:sumW(r)}
\sum_{r=1}^m w(r) = m\log \log m + O(m).
\end{equation}

Since an integer $s$ has $O(\log s / \log\log s)$ prime divisors (for example, see \cite[Section 22.10]{HardyWright06}), every $2\le r \le m$ satisfies that $w(r)= O(\log m / \log \log m)$.
We wish to maximize the expression $\sum_{r=2}^m 2^{w(r)}$ under the conditions $w(r)= O(\log m / \log \log m)$ and \eqref{eq:sumW(r)}.
Since exponential functions are convex, this maximum is obtained when $\Theta(m(\log\log m)^2/\log m)$ values of $r$ satisfy $w(r) = \Theta(\log m / \log \log m)$ and the rest satisfy $w(r)=0$.
We conclude that
\[ \sum_{r=2}^m 2^{w(r)} = O\left(\frac{m(\log\log m)^2}{\log m}\cdot \frac{\log m}{\log\log m}\right) = O\left(m\log\log m\right).\]
 
(b) By part (a), we have that
\[ \sum_{r=2}^m 2^{w(r)}r \le m\sum_{r=2}^m 2^{w(r)} = O\left(m^2\log\log m\right).\]
\end{proof}

\ignore{--------------------------------------------------------------
We can also use Lemma~\ref{le:phiMN} to prove the following variant of Lemma~\ref{le:TotientPastN}(c).

\begin{lemma} \label{le:TotientPreN}
Every integers $1< m <n$ satisfy that
\[ \sum_{s<m \atop (s,n)=1} s = \frac{m^2}{n^2} \cdot \frac{n\cdot \phi(n)}{2} + o\left(\frac{m^2}{n^2} \cdot n\cdot \phi(n)\right) + O\left(m 2^{w(n)}\right). \]
\end{lemma}
\begin{proof}
We consider a positive integer $k$ and partition $\{1,2,\ldots,m\}$ into the $k$ subsets 
\begin{align*} 
\{1,2,\ldots, \lfloor m/k \rfloor\}, \{\lfloor m/k \rfloor+1,\lfloor m/k \rfloor+2&,\ldots, \lfloor 2m/k \rfloor\}, \ldots, \\[2mm] 
&\{\lfloor m(k-1)/k \rfloor+1,\lfloor m(k-1)/k \rfloor+2,\ldots, m\}. 
\end{align*}

For $1\le j \le k$, we consider the $j$th subset.
Lemma~\ref{le:phiMN} states that $\{1,2,\ldots,\lfloor jm/k \rfloor\}$ contains $\phi(n)\cdot \frac{jm}{kn} + O(2^{w(n)})$ numbers relatively prime to $n$.
The same lemma also states that $\{1,2,\ldots,\lfloor (j-1)m/k \rfloor\}$ contains $\phi(n)\cdot \frac{(j-1)m}{nk} + O(2^{w(n)})$ numbers relatively prime to $n$.
Subtracting these two quantities implies that the $j$th subset contains $\phi(n)\cdot \frac{m}{kn} + O(2^{w(n)})$ numbers relatively prime to $n$.
We denote this amount as $t_j=\phi(n)\cdot \frac{m}{kn} + O(2^{w(n)})$.
Then
\begin{align*}
\sum_{\lfloor (j-1)m/k \rfloor< s \le \lfloor jm/k \rfloor \atop (s,n)=1} &\hspace{-10mm} s \le \frac{jm}{k} + \Big(\frac{jm}{k}-1\Big) + \Big(\frac{jm}{k}-2\Big) + \cdots + \Big(\frac{jm}{k}-t_j+1\Big) \\
&\hspace{50mm}= \frac{t_j(2jm/k -t_j+1)}{2} < \frac{t_j m j}{k}, \\[2mm]
\sum_{\lfloor (j-1)m/k \rfloor< s \le \lfloor jm/k \rfloor \atop (s,n)=1} &\hspace{-10mm} s > \frac{(j-1)m}{k} + \Big(\frac{(j-1)m}{k}+1\Big) +  \Big(\frac{(j-1)m}{k}+2\Big) +\cdots + \Big(\frac{(j-1)m}{k}+t_j-1\Big) \\[2mm]
&\hspace{50mm} = \frac{t_j(2(j-1)m/k +t_j-1)}{2} > \frac{t_j m (j-1)}{k}. 
\end{align*}

By summing the above over every $1\le j \le k$ and recalling that $t_j=\phi(n)\cdot \frac{m}{kn} + O(2^{w(n)})$, we obtain that
\begin{align*}
\sum_{s<m \atop (s,n)=1} s < \sum_{j=1}^k \frac{t_j m j}{k} &= \sum_{j=1}^k \frac{mj}{k}\left(\phi(n)\cdot \frac{m}{kn} + O(2^{w(n)})\right) \\
&= \frac{m}{k} \cdot \frac{k(k+1)}{2}\cdot \left(\phi(n)\cdot \frac{m}{kn} + O(2^{w(n)})\right) \\[2mm]
&= \frac{k+1}{k}\cdot \frac{m^2}{n^2} \cdot \frac{n\cdot\phi(n)}{2} + O\left(mk 2^{w(n)}\right), \\
\sum_{s<m \atop (s,n)=1} s > \sum_{j=1}^k \frac{t_j m (j-1)}{k} &= \sum_{j=1}^k \frac{m(j-1)}{k}\left(\phi(n)\cdot \frac{m}{kn} + O(2^{w(n)})\right) \\
&= \frac{m}{k} \cdot \frac{k(k-1)}{2}\cdot \left(\phi(n)\cdot \frac{m}{kn} + O(2^{w(n)})\right) \\[2mm]
&= \frac{k-1}{k}\cdot \frac{m^2}{n^2} \cdot \frac{n\cdot\phi(n)}{2} + O\left(mk 2^{w(n)}\right).
\end{align*} 

For every positive integer $c$, by taking $k$ to be a sufficiently large constant, we obtain that 
\begin{align*} 
\sum_{s<m \atop (s,n)=1} s &<  \left(1+\frac{1}{c}\right)\cdot \frac{m^2}{n^2} \cdot \frac{n\cdot \phi(n)}{2} + O\left(m2^{w(n)}\right),\\
\sum_{s<m \atop (s,n)=1} s &>  \left(1-\frac{1}{c}\right)\cdot \frac{m^2}{n^2} \cdot \frac{n\cdot \phi(n)}{2} + O\left(m2^{w(n)}\right).
\end{align*}
\end{proof}
} 

\section{Proof of Theorem~\ref{th:main}} \label{sec:MainProof}

We fix $1/3\le \alpha\le 1/2$ and consider the point set
\[ \pts = \{(i,j) \in \NN^2\ :\ 1\le i \le n^{\alpha},\ 1\le j \le n^{1-\alpha} \}. \]

We fix $0<\eps<1$ such that $\eps = o(1)$ and $\eps  = \omega(n^{-\alpha})$.
In other words, $\eps$ decreases as $n$ increases, and the rate of decrease is asymptotically slower than $n^{-\alpha}$.
At the end of this section, we show how the choice of $\eps$ controls the relation between $|\pts|$ and $|\lines|$.
For simplicity, we assume that both $n^{\alpha}$ and $\eps n^{\alpha}$ are integers. 
Let $\lines$ be the set of all lines that contain at least $\eps n^{\alpha}$ points of $\pts$, are not axis parallel, and are not of slope $\pm n^{1-2\alpha}$.

We partition $\lines$ into four subsets according the their slope $s$:
\begin{itemize}[noitemsep,topsep=1pt]
\item $\lines_1$: lines with slope $0<s<n^{1-2\alpha}$,
\item $\lines_2$: lines with slope $s>n^{1-2\alpha}$,
\item $\lines_3$: lines with slope $-n^{1-2\alpha}<s<0$,
\item $\lines_4$: lines with slope $s<-n^{1-2\alpha}$.
\end{itemize}

\begin{figure}[ht]
    \centering
    \begin{subfigure}[b]{0.34\textwidth}
    \centering
        \includegraphics[width=0.97\textwidth]{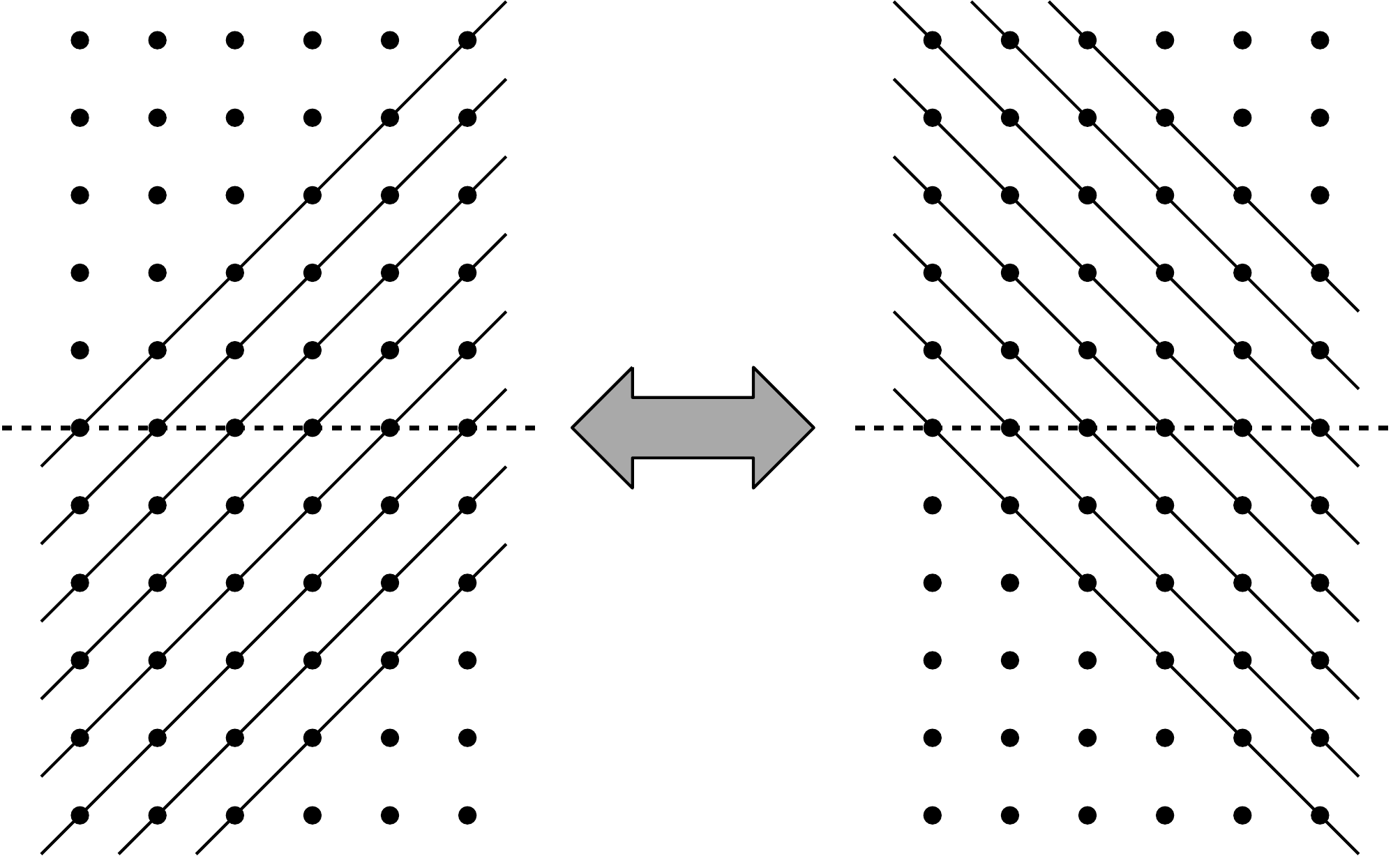}
        \caption{}
    \end{subfigure}
    \hspace{1cm}
    \begin{subfigure}[b]{0.19\textwidth}
        \centering
        \includegraphics[width=\textwidth]{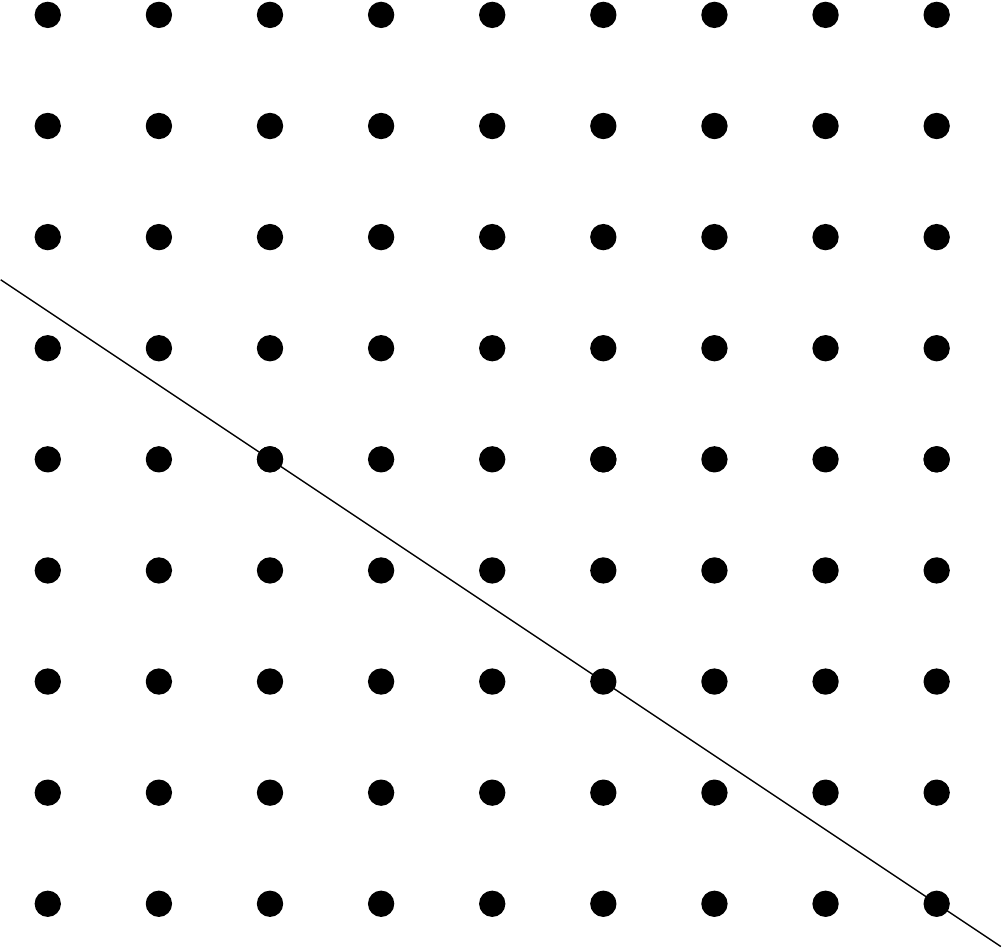}
		\caption{}    
    \end{subfigure}
    \hspace{1cm}
    \begin{subfigure}[b]{0.14\textwidth}
        \centering
        \includegraphics[width=\textwidth]{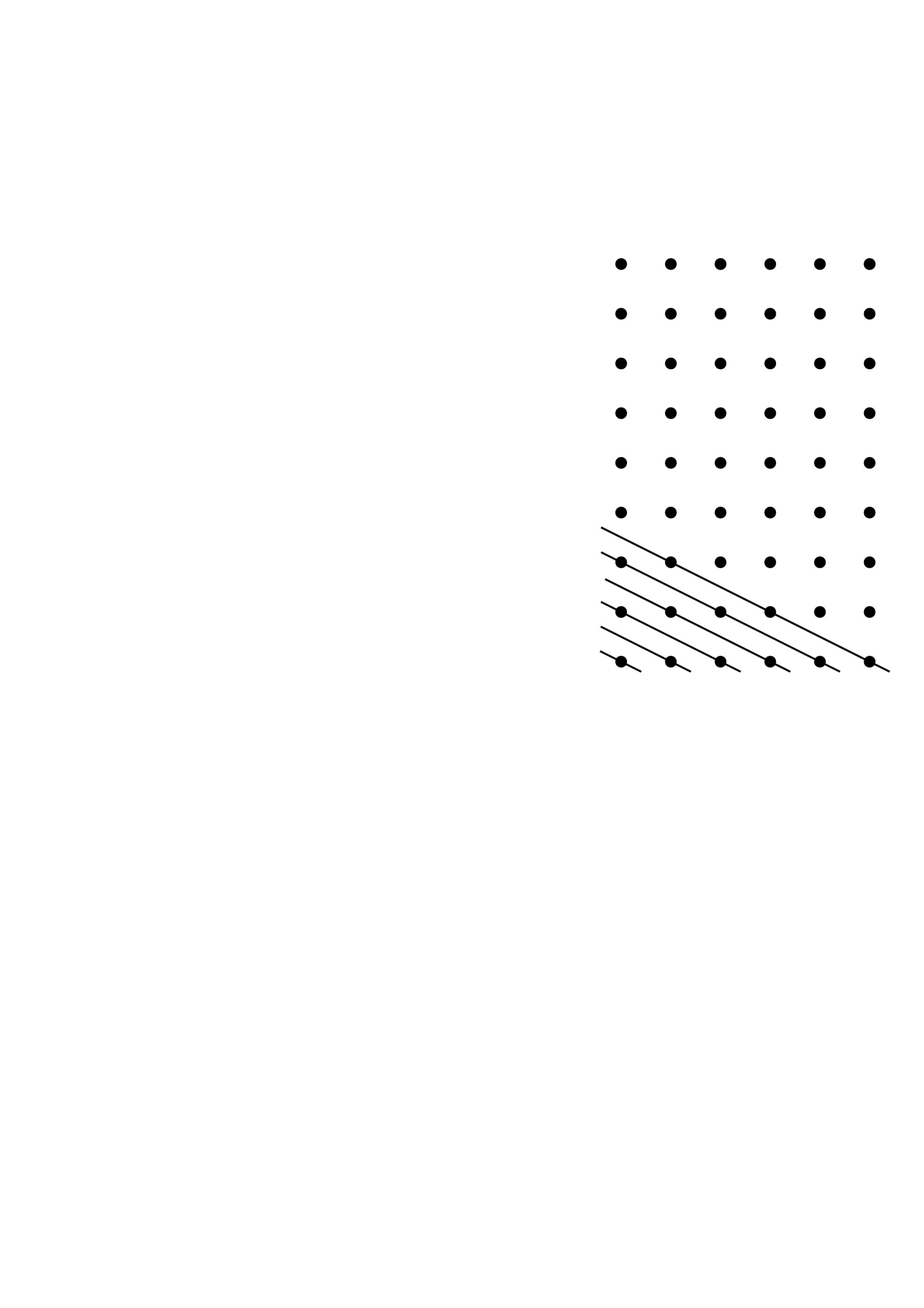}
		\caption{}    
    \end{subfigure}
    \vspace{2mm}
    \caption{(a) The reflection of the plane across $y=(n^{1-\alpha}+1)/2$ is a bijection between $\lines_1$ and $\lines_3$ and takes $\pts$ to itself.  (b) A line with slope -2/3 contains a point every other row and every third column. (c) The line set $\lines_{s/r}$.}
    \label{fi:LineSets}
\end{figure}

We claim that $|\lines_1| = |\lines_3|$ and that $I(\pts,\lines_1)=I(\pts,\lines_3)$. 
Indeed, the reflection of the plane across the line $y=(n^{1-\alpha}+1)/2$ is a bijection between $\lines_1$ and $\lines_3$ and takes $\pts$ to itself. 
See Figure \ref{fi:LineSets}(a).
A symmetric argument shows that $|\lines_2| = |\lines_4|$ and that $I(\pts,\lines_2)=I(\pts,\lines_4)$.

When $\alpha=1/2$, all four sets of lines are identical.
Indeed, in this case a 90 degrees rotation around the center of $\pts$ is a bijection between $\lines_1$ and $\lines_4$ (and takes $\pts$ to itself).  
This avoids some of the main difficulties of our analysis, making Erd\H os's construction easier to study. 
In particular, this case does not require Lemma~\ref{le:totientProperties}(d), Lemma~\ref{le:phiMN}, Corollary~\ref{co:phiMN}, and Lemma~\ref{le:sum2w(r)}.

\parag{The bottom rows with $\lines_3$.} 
We first study the lines of $\lines_3$, where the slopes are between $-n^{1-2\alpha}$ and 0. 
Each such slope can be uniquely written as $-s/r$, where $r,s\in \NN$, $s<r\cdot n^{1-2\alpha}$, and $(r,s)=1$.
We fix such values of $r$ and $s$.
Recall that $\pts$ consists of $n^{\alpha}$ columns and of $n^{1-\alpha}$ rows.
A line with slope $-s/r$ is incident to at most one point every $r$ columns and to at most one point every $s$ rows.
See Figure \ref{fi:LineSets}(b). 
We may assume that $r\le 1/\eps$, since otherwise no line with slope $-s/r$ contains $\eps n^\alpha$ points of $\pts$.
Since $\eps  = \omega(n^{-\alpha})$, we get that $r=o(n^\alpha)$. 
A symmetric argument leads to $s\le n^{1-2\alpha}/\eps=o(n^{1-\alpha})$.

Let $\lines_{s/r}$ be the set of $n^{\alpha}$ lines of slope $-s/r$ that are incident to a point on the bottom row of $\pts$.
See Figure \ref{fi:LineSets}(c).
Some lines of $\lines_{s/r}$ are not in $\lines_3$, since they are incident to fewer than $\eps n^\alpha$ points of $\pts$.
The $r$ leftmost lines of $\lines_{s/r}$ contain a single point of $\pts$.
The next $r$ leftmost lines of $\lines_{s/r}$ are incident to two points of $\pts$. 
The next $r$ leftmost lines are incident to three points, and so on.
After getting to $r$ lines that are incident to $\lceil n^\alpha/r\rceil-1$ points of $\pts$, at most $r$ lines contain $\lceil n^\alpha/r\rceil$ points of $\pts$.
(This relies on $s<r\cdot n^{1-2\alpha}$. As shown below for $\lines_4$, when $s>r\cdot n^{1-2\alpha}$ we have a rather different behavior.)

To find $|\lines_{s/r}\cap \lines_3|$, we remove from $\lines_{s/r}$ the lines that contain fewer than $\eps n^\alpha$ points of $\pts$. 
By the preceding paragraph, we have that
\begin{equation} \label{eq:LinesBottomRow25}
|\lines_{s/r}\cap\lines_3| = n^\alpha - r(\eps n^\alpha-1) = n^\alpha(1-\eps r) +r. 
\end{equation}
We also have that 
\begin{align} 
I(\pts,\lines_{s/r}\cap\lines_3) &=r\big(\eps n^\alpha+(\eps n^\alpha+1)+(\eps n^\alpha+2)+\cdots +n^\alpha/r - O(n^\alpha/r)\big)  \nonumber \\[2mm]
&=r\frac{(n^\alpha/r+\eps n^\alpha)(n^\alpha/r-\eps n^\alpha)}{2} - O(n^\alpha) \nonumber \\
&= \frac{n^{2\alpha}}{2r}-\frac{r\eps^2n^{2\alpha}}{2}- O(n^{\alpha}). \label{eq:IncidencesBottomRow25}
\end{align}

Let $\lines_{s/r}^*$ be the set of lines with slope $-s/r$ that are incident to at least one point from the $s$ bottom rows of $\pts$.  
A line with slope $-s/r$ contains at most one point from every $s$ consecutive rows of $\pts$, so each point from the $s$ bottom rows is incident to a distinct line with slope $-s/r$.
This implies that $|\lines_{s/r}^*|=s n^\alpha$.
Also, \eqref{eq:LinesBottomRow25} implies that 
\begin{equation} \label{eq:LinesBottomRows25}
|\lines_{s/r}^*\cap\lines_3| \le s n^\alpha(1-\eps r) +rs. 
\end{equation}

\begin{figure}[ht]
    \centering
    \begin{subfigure}[b]{0.12\textwidth}
    \centering
        \includegraphics[width=0.97\textwidth]{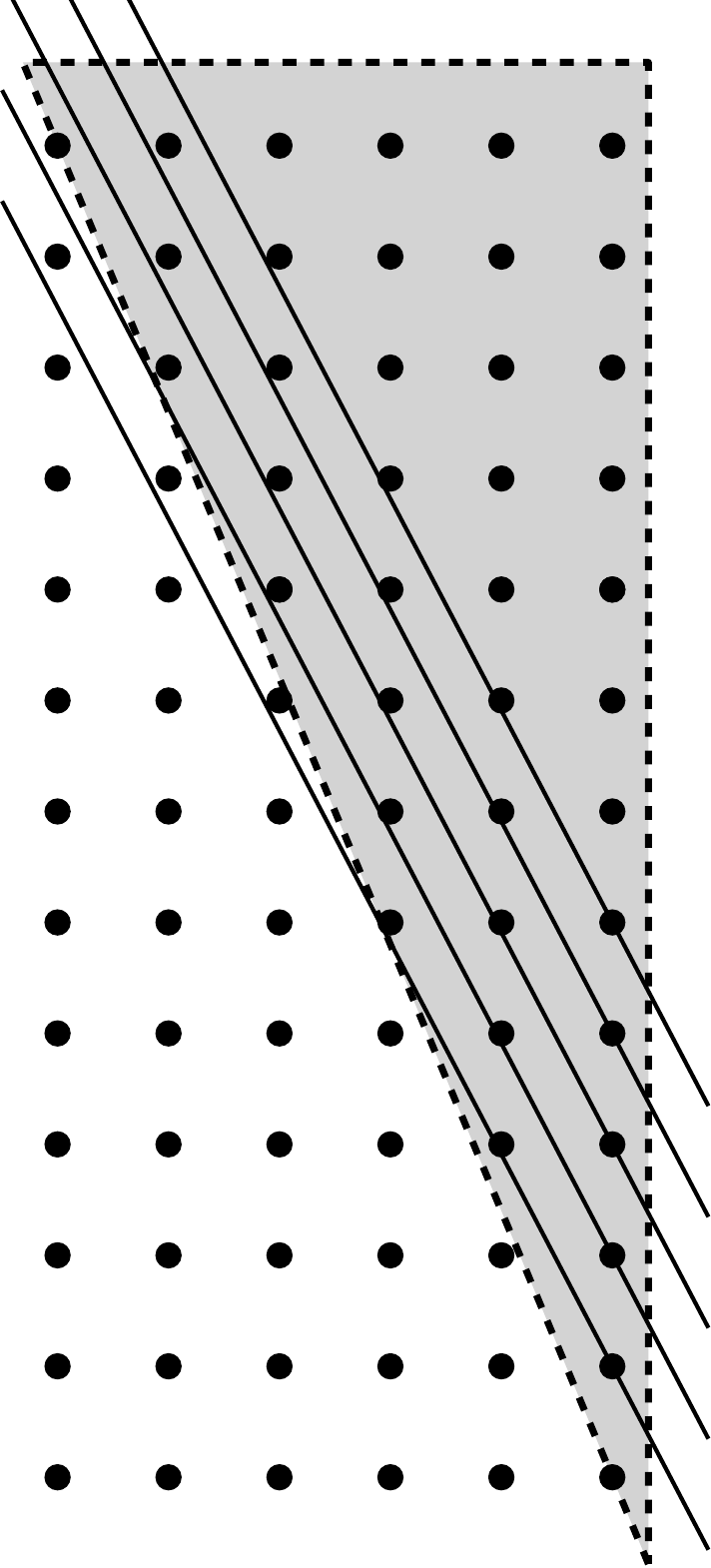}
        \caption{}
    \end{subfigure}
    \hspace{1cm}
    \begin{subfigure}[b]{0.14\textwidth}
        \centering
        \includegraphics[width=\textwidth]{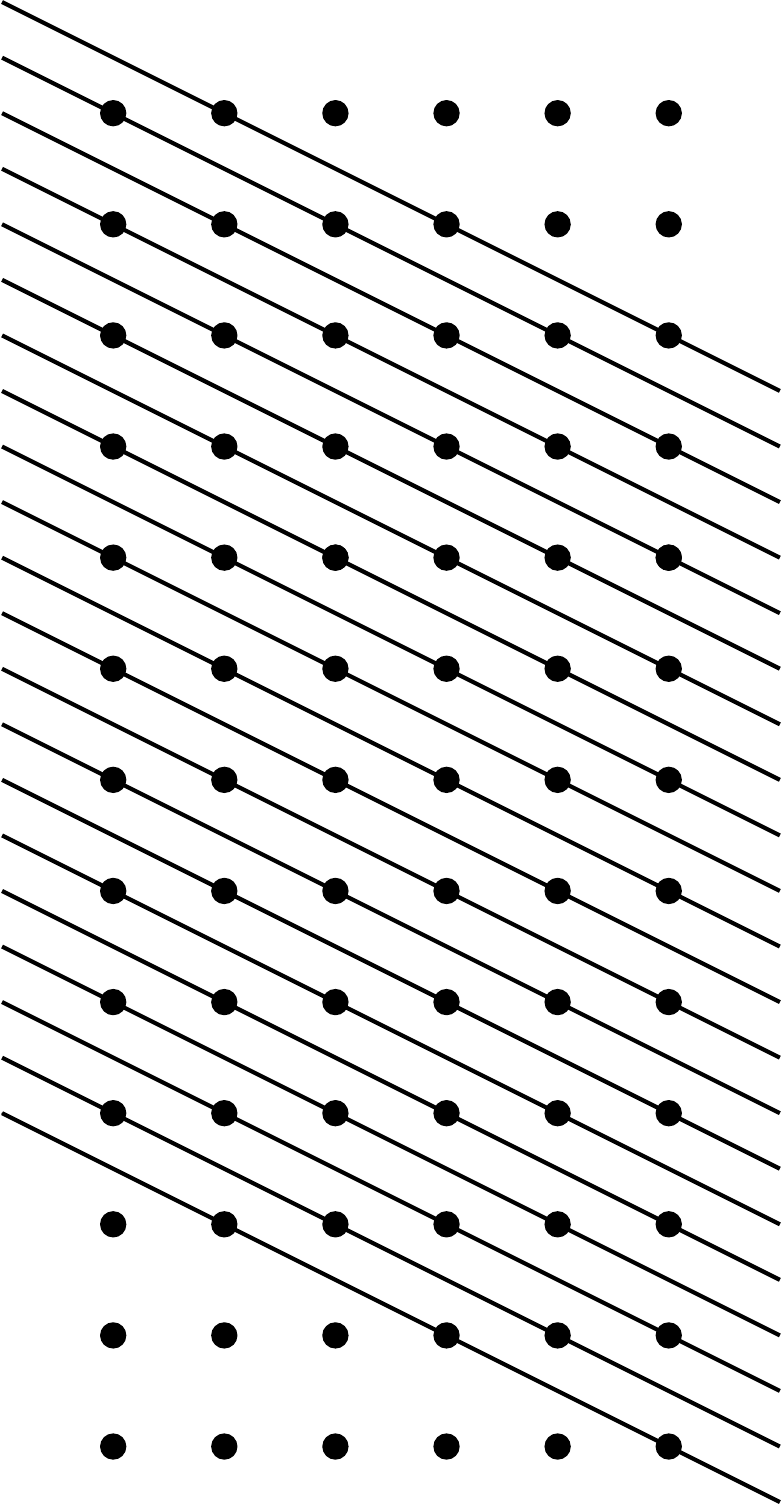}
		\caption{}    
    \end{subfigure}
    \vspace{2mm}
    \caption{(a) Only lines that are incident to a point inside the triangle might intersect the top of the triangle.  (b) Many maximal lines may be incident to points of the rightmost column.}
    \label{fi:MoreLineSets}
\end{figure}

We repeat the above analysis for $\lines_{s/r}$ with the second lowest row of $\pts$.
The only difference is that the rightmost line in this row may reach the top of $\pts$ and thus contain one point less than the rightmost line of $\lines_{s/r}$.
This cannot happen to more than one line because $s<r\cdot n^{1-2\alpha}$.
See Figure \ref{fi:MoreLineSets}(a).
Similarly, when studying lines incident to points from the third lowest row, each of the two rightmost lines may contain one point less (compared to the corresponding lines in $\lines_{s/r}$).
For the fourth lowest row, the three rightmost lines may contain one point less, and so on. 
Since a line contains a point every $s$ rows, no line loses more than one point. 
In total, the number of lines that lost one incidence is smaller than $sn^\alpha$.
Moreover, a line that was incident to $\eps n^\alpha$ points in the bottom row may contain $\eps n^\alpha-1$ points in higher rows.
Such a line is not in $\lines_3$, so we do not count any of its incidences.
Each row has at most $r$ such lines, with a total loss of less than $sr\eps n^\alpha$ incidences.
Since $r\le 1/\eps$, this quantity is also at most $sn^{\alpha}$.
Combining this with \eqref{eq:IncidencesBottomRow25} implies that
\begin{equation} \label{eq:IncidencesBottomRows25}
I(\pts,\lines_{s/r}^*\cap\lines_3) \ge s\cdot \left(\frac{n^{2\alpha}}{2r}-\frac{r\eps^2n^{2\alpha}}{2}\right) - 2sn^\alpha. 
\end{equation}

\parag{The rightmost columns with $\lines_3$.} 
We continue with the values of $s$ and $r$ that were fixed above. 
Let $\hat{\lines}_{s/r}$ be the set of $n^{1-\alpha}-s$ lines of slope $-s/r$ that are incident to a point on the rightmost column of $\pts$, excluding the bottom $s$ points.
The top $s$ lines of $\hat{\lines}_{s/r}$ contain a single point of $\pts$.
The next $s$ top lines of $\hat{\lines}_{s/r}$ are incident to two points of $\pts$.
The next $s$ top lines are incident to three points, and so on.
After getting to $s$ lines that are incident to $\lceil n^\alpha/r\rceil-1$ points of $\pts$, some number of lines contain $\lceil n^\alpha/r\rceil$ points of $\pts$.
We refer to a line that contains $\lceil n^\alpha/r\rceil$ points of $\pts$ as a \emph{maximal line}.
Unlike the the case of $\lines_{s/r}$ above, the set $\hat{\lines}_{s/r}$ may contain a lot more than $s$ maximal lines.
See Figure \ref{fi:MoreLineSets}(b).

To find $|\hat{\lines}_{s/r}\cap \lines_3|$, we remove from $\hat{\lines}_{s/r}$ the lines that contain fewer than $\eps n^\alpha$ points of $\pts$. 
Recalling that we ignore the bottom $s$ lines leads to 
\begin{equation} \label{eq:LinesRightColumn25}
|\hat{\lines}_{s/r}\cap\lines_3| = n^{1-\alpha} - s(\eps n^\alpha-1) - s = n^{1-\alpha}- s\eps n^\alpha. 
\end{equation}
A similar argument can be used to estimate the number of maximal lines in $\hat{\lines}_{s/r}$. 
However, it is possible that there are no maximal lines when ignoring the bottom $s$ points on the rightmost column. 
To address this small technicality, we replace $-s$ with $-O(s)$, obtaining that the number of maximal lines is at least  
\[ n^{1-\alpha} - s(\lceil n^\alpha/r\rceil-1) - O(s) = n^{1-\alpha} - s n^\alpha/r - O(s). \]

The above expressions for the amount of maximal lines implies that (the $O(\cdot)$-notation at the top row addresses the removal of the floor function and the redundant $+n^\alpha/r$)
\begin{align} 
I(\pts,\hat{\lines}_{s/r}\cap\lines_3) &=s\cdot \big(\eps n^\alpha+(\eps n^\alpha+1) +(\eps n^\alpha+2)+\cdots +n^\alpha/r - O(n^\alpha/r) \big)  \nonumber \\
&\hspace{50mm} + \left\lceil \frac{n^\alpha}{r}\right\rceil\left(n^{1-\alpha} - s n^\alpha/r - O(s)\right)\nonumber \\
&=s\cdot \frac{(n^\alpha/r+\eps n^\alpha)(n^\alpha/r-\eps n^\alpha)}{2} + \frac{n}{r} - \frac{n^{2\alpha}s}{r^2}- O\left(n^{1-\alpha}\right) \nonumber \\
&= \frac{n^{2\alpha}s}{2r^2}-\frac{s\eps^2n^{2\alpha}}{2} + \frac{n}{r} - \frac{n^{2\alpha}s}{r^2}- O(n^{1-\alpha}) \nonumber \\
&= \frac{n}{r} -\frac{s\eps^2n^{2\alpha}}{2} - \frac{n^{2\alpha}s}{2r^2}- O(n^{1-\alpha}). \label{eq:IncidencesRightColumn25}
\end{align}

Let $\hat{\lines}_{s/r}^*$ be the set of lines with slope $-s/r$ that are incident to a point in the $r$ rightmost columns of $\pts$, but not to a point from the bottom $s$ rows.  
By \eqref{eq:LinesRightColumn25}, we have that 
\begin{equation} \label{eq:LinesRightColumns25}
|\hat{\lines}_{s/r}^*\cap\lines_3| \le r(n^{1-\alpha}- s\eps n^\alpha). 
\end{equation}

As before, when adapting the above analysis of $\hat{\lines}_{s/r}$ to lines that are incident to the other $r-1$ rightmost columns, some lines may lose a single incidence.
Being somewhat wasteful, we bound the number of lines that lose a single incidence as being smaller than $rn^{1-\alpha}$.
Also as before, in each of these $r-1$ columns, at most $s$ lines may no longer belong to $\lines_3$ due to forming $\eps n^\alpha-1$ incidences. 
The number of incidences that are lost due to this is smaller than $sr\eps n^{\alpha}$.
Since $s\le n^{1-2\alpha}/\eps$, this amount is also smaller than $rn^{1-\alpha}$.
Then \eqref{eq:IncidencesRightColumn25} implies that
\begin{equation} \label{eq:IncidencesRightColumns25}
I(\pts,\hat{\lines}_{s/r}^*\cap\lines_3) \ge n -\frac{sr\eps^2n^{2\alpha}}{2} - \frac{n^{2\alpha}s}{2r} - 2rn^{1-\alpha}. 
\end{equation}

\parag{Completing the analysis of $\lines_3$.} 
By the condition about the bottom $s$ rows in the definition of $\hat{\lines}_{s/r}^*$, we have that $\lines_{s/r}^*\cap \hat{\lines}_{s/r}^* = \varnothing$.
Every line of $\lines$ with slope $-s/r$ contains a point from the $s$ bottom rows or the $r$ rightmost columns of $\pts$.
In other words, every line of $\lines$ with slope $-s/r$ is either in $\lines_{s/r}^*$ or in $\hat{\lines}_{s/r}^*$.
By combining \eqref{eq:LinesBottomRows25} and \eqref{eq:LinesRightColumns25}, we get that the number of these lines is at most 
\begin{equation} \label{eq:TotalLinesFixedsr25}
s n^\alpha(1-\eps r) +rs + r(n^{1-\alpha}- s\eps n^{\alpha}) = s \cdot (n^\alpha-2 n^\alpha \eps r +r) + rn^{1-\alpha}.
\end{equation}
By combining \eqref{eq:IncidencesBottomRows25} and \eqref{eq:IncidencesRightColumns25}, we get that the number of incidences with those lines is at least
\begin{align} 
s\cdot \left(\frac{n^{2\alpha}}{2r}-\frac{r\eps^2n^{2\alpha}}{2}\right) - 2sn^\alpha + n &-\frac{sr\eps^2n^{2\alpha}}{2} - \frac{n^{2\alpha}s}{2r} - 2rn^{1-\alpha} \nonumber \\
&= n - rs\eps^2n^{2\alpha} -o(n). \label{eq:TotalIncidencesFixedsr25}
\end{align}

We recall that $0<r<1/\eps$, that $1\le s<rn^{1-2\alpha}$, and that $\gcd(r,s)=1$.
Combining this with \eqref{eq:TotalLinesFixedsr25} and Lemma~\ref{le:TotientPastN} implies that 
\begin{align*} 
|\lines_3| &\le \sum_{r=1}^{1/\eps} \sum_{1\le s < rn^{1-2\alpha} \atop (r,s)=1} \left(s \cdot (n^\alpha-2 n^\alpha \eps r +r) + rn^{1-\alpha}\right) \\
&= \sum_{r=1}^{1/\eps} \left( n^{2-4\alpha}\cdot\frac{r\cdot \phi(r)}{2} \cdot (n^\alpha-2 n^\alpha \eps r +r) + n^{1-2\alpha}\phi(r)\cdot rn^{1-\alpha}\right) \\
&= \sum_{r=1}^{1/\eps} \left( r\cdot \phi(r) \cdot \frac{3n^{2-3\alpha}}{2} + r^2\cdot \phi(r) \cdot \frac{n^{2-4\alpha}-2\eps n^{2-3\alpha}}{2}\right). 
\end{align*}

By applying parts (b) and (c) of Lemma~\ref{le:totientProperties}, we obtain that
\begin{align} 
|\lines_3| &\le \left(\frac{2}{\pi^2\eps^3} +o(1/\eps^3)\right) \cdot \frac{3n^{2-3\alpha}}{2} + \left(\frac{3}{2\pi^2\eps^4} +o(1/\eps^4)\right) \cdot \frac{n^{2-4\alpha}-2\eps n^{2-3\alpha}}{2} \nonumber \\[2mm] 
&= \frac{3n^{2-3\alpha}}{\pi^2\eps^3}-\frac{3n^{2-3\alpha}}{2\pi^2\eps^3}+o\left(\frac{n^{2-3\alpha}}{\eps^3}\right) = \frac{3n^{2-3\alpha}}{2\pi^2\eps^3} +o\left(\frac{n^{2-3\alpha}}{\eps^3}\right). \label{eq:LinesL325}
\end{align}

Combining \eqref{eq:TotalIncidencesFixedsr25} and Lemma~\ref{le:TotientPastN} leads to
\begin{align*} 
I(\pts,\lines_3) &= \sum_{r=1}^{1/\eps} \sum_{1\le s < rn^{1-2\alpha} \atop (r,s)=1} \left(n - rs\eps^2n^{2\alpha} -o(n))\right) \\
&= \sum_{r=1}^{1/\eps} \left(n^{1-2\alpha}\cdot \phi(r)\cdot n - n^{2-4\alpha}\cdot\frac{r\cdot \phi(r)}{2}\cdot r\eps^2n^{2\alpha} -o\left(n^{1-2\alpha}\cdot\phi(r)\cdot n\right)\right) \\
&= \sum_{r=1}^{1/\eps} \left(\phi(r)\cdot (n^{2-2\alpha}-o(n^{2-2\alpha}))-r^2\phi(r)\cdot \frac{n^{2-2\alpha}\eps^2}{2} \right). 
\end{align*}

By applying parts (a) and (c) of Lemma~\ref{le:totientProperties}, we obtain that
\begin{align}
I(\pts,\lines_3) &= \left(\frac{3}{\pi^2\eps^2}+o\left(\frac{1}{\eps^2}\right)\right)\cdot (n^{2-2\alpha}-o(n^{2-2\alpha}))-\left(\frac{3}{2\pi^2\eps^4}+o\left(\frac{1}{\eps^4}\right)\right)\cdot \frac{n^{2-2\alpha}\eps^2}{2} \nonumber \\[2mm]
&= \frac{3n^{2-2\alpha}}{\pi^2\eps^2} -\frac{3n^{2-2\alpha}}{4\pi^2\eps^2} + o\left(\frac{n^{2-2\alpha}}{\eps^2} \right) = \frac{9n^{2-2\alpha}}{4\pi^2\eps^2} + o\left(\frac{n^{2-2\alpha}}{\eps^2} \right). \label{eq:IncL325}
\end{align}

\parag{The rightmost columns with $\lines_4$.} 
We now study the lines of $\lines_4$, where the slopes are smaller than $-n^{1-2\alpha}$. 
Each such slope can be uniquely written as $-s/r$, where $r,s\in \NN$, $s>r\cdot n^{1-2\alpha}$, and $(r,s)=1$.
We fix values of $r$ and $s$ that satisfy these conditions.
The main difference from the analysis of $\lines_3$ is that now $n^{1-\alpha}/s < n^\alpha/r$.
Thus, in the current case \emph{maximal lines} contain $\lceil n^{1-\alpha}/s\rceil$ points of $\pts$, rather than $\lceil n^\alpha/r \rceil$.
As before, we may assume that $r\le 1/\eps$ and $s\le n^{1-2\alpha}/\eps$, since otherwise no line with slope $-s/r$ contains $\eps n^\alpha$ points of $\pts$.
When $\alpha=1/2$, we have that $|\lines_4|=|\lines_3|$ and that $I(\pts,\lines_4)=I(\pts,\lines_3)$.
It remains to consider the case where $1/3 \le \alpha <1/2$.

As before, let $\hat{\lines}_{s/r}$ be the set of $n^{1-\alpha}$ lines of slope $-s/r$ that are incident to a point from the rightmost column of $\pts$. 
Unlike the analysis of $\lines_3$, in this case we do not ignore the bottom $s$ points of the rightmost column.
Also as before, the top $s$ lines of $\hat{\lines}_{s/r}$ contain a single point of $\pts$.
The next $s$ top lines of $\hat{\lines}_{s/r}$ are incident to two points of $\pts$, the next $s$ lines to three points, and so on.
After getting to $s$ lines that are incident to $\lceil n^{1-\alpha}/s\rceil-1$ points of $\pts$, at most $s$ lines contain $\lceil n^{1-\alpha}/s\rceil$ points of $\pts$.

By removing from $\hat{\lines}_{s/r}$ lines that contain fewer than $\eps n^\alpha$ points of $\pts$, we obtain that
\begin{equation} \label{eq:LinesRightColumn25steep}
|\hat{\lines}_{s/r}\cap\lines_4| = n^{1-\alpha} - s(\eps n^\alpha-1) < n^{1-\alpha} - s \eps n^\alpha. 
\end{equation}
Imitating the analysis of the bottom row for $\lines_3$, we obtain that
\begin{align} 
I(\pts,\hat{\lines}_{s/r}\cap\lines_4) &=s\big(\eps n^\alpha+(\eps n^\alpha+1) +(\eps n^\alpha+2)+\cdots +n^{1-\alpha}/s- O(n^{1-\alpha}/s) \big)  \nonumber \\[2mm]
&=s\cdot \frac{(n^{1-\alpha}/s+\eps n^\alpha)(n^{1-\alpha}/s-\eps n^\alpha)}{2}- O(n^{1-\alpha}) \nonumber \\[2mm]
&= \frac{n^{2-2\alpha}}{2s}-\frac{s\eps^2n^{2\alpha}}{2} - O(n^{1-\alpha}). \label{eq:IncidencesRightColumn25steep}
\end{align}

Let $\hat{\lines}_{s/r}^*$ be the set of lines with slope $-s/r$ that are incident to a point in the $r$ rightmost columns of $\pts$ (including points from the bottom $s$ rows).  
By \eqref{eq:LinesRightColumn25steep}, we have that 
\begin{equation} \label{eq:LinesRightColumns25steep}
|\hat{\lines}_{s/r}^*\cap\lines_4| < r(n^{1-\alpha}- s\eps n^{\alpha}). 
\end{equation}
When moving from the rightmost column of $\pts$ to one of the other $r$ rightmost columns, the only difference is that some lines lose a single incidence. 
Being highly inefficient, the number of lost incidences is smaller than $rn^{1-\alpha}$.
In each of the columns, the number of lines that are no longer incident to at least $\eps n^\alpha$ decreases by at most $s$. 
In total, the number of incidences lost in this way is smaller than $rs\eps n^{\alpha}$.
Since $s\le n^{1-2\alpha}/\eps$, this amount is also smaller than $rn^{1-\alpha}$.
Combining the above with \eqref{eq:IncidencesRightColumn25steep} leads to
\begin{equation} \label{eq:IncidencesRightColumns25steep}
I(\pts,\hat{\lines}_{s/r}\cap\lines_4) = \frac{rn^{2-2\alpha}}{2s}-\frac{rs\eps^2n^{2\alpha}}{2} - O(rn^{1-\alpha}). 
\end{equation}

\parag{The bottom rows with $\lines_4$.} 
Let $\lines_{s/r}$ be the set of $n^\alpha-r$ lines of slope $-s/r$ that are incident to a point on the bottom row of $\pts$, excluding the $r$ rightmost points. 
The $r$ leftmost lines of $\lines_{s/r}$ contain a single point of $\pts$.
The next $r$ leftmost lines of $\lines_{s/r}$ are incident to two points of $\pts$. 
The next $r$ leftmost lines are incident to three points, and so on.
After $r$ lines that are incident to $\lceil n^{1-\alpha}/s\rceil-1$ points of $\pts$, the number of lines that contain $\lceil n^{1-\alpha}/s\rceil$ points (that is, of maximal lines) is 
\[ n^{\alpha} - r(\lceil n^{1-\alpha}/s\rceil-1) -r \ge n^{\alpha} - \frac{n^{1-\alpha}r}{s}. \]
Similarly, we have that
\begin{equation} \label{eq:LinesBottomRow25steep} 
|\lines_{s/r} \cap S_4| = n^\alpha - r(\eps n^\alpha-1) -r < n^\alpha - r\eps n^\alpha. 
\end{equation}

Adapting the analysis in \eqref{eq:IncidencesRightColumn25} leads to 
\begin{align} 
I(\pts,\lines_{s/r}\cap\lines_4) &\ge r\big(\eps n^\alpha+(\eps n^\alpha+1)+(\eps n^\alpha+2)+\cdots +(n^{1-\alpha}/s-1)\big) \nonumber \\
&\hspace{80mm}+ \frac{n^{1-\alpha}}{s} \cdot \left(n^{\alpha} - \frac{n^{1-\alpha}r}{s}\right)  \nonumber \\[2mm]
&= r\cdot\frac{(n^{1-\alpha}/s+\eps n^\alpha)(n^{1-\alpha}/s-\eps n^\alpha)}{2} + \frac{n}{s} - \frac{rn^{2-2\alpha}}{s^2} \nonumber \\[2mm]
&= \frac{rn^{2-2\alpha}}{2s^2}-\frac{r\eps^2n^{2\alpha}}{2} + \frac{n}{s} - \frac{rn^{2-2\alpha}}{s^2} = \frac{n}{s}-\frac{rn^{2-2\alpha}}{2s^2}-\frac{r\eps^2n^{2\alpha}}{2}.\label{eq:IncidencesBottomRow25steep}
\end{align}

Let $\lines_{s/r}^*$ be the set of lines with slope $-s/r$ that are incident to a point in the $s$ bottom rows of $\pts$, but not from the $r$ rightmost columns.  
A line with slope $-s/r$ contains a point of the integer lattice every $s$ rows, so such a line is incident to at most one point from the bottom $s$ rows.
Thus, $|\lines_{s/r}^*|=s (n^\alpha-r)$.
Equation \eqref{eq:LinesBottomRow25steep} implies that 
\begin{equation} \label{eq:LinesBottomRows25steep}
|\lines_{s/r}^*\cap\lines_4| \le sn^\alpha - rs\eps n^\alpha. 
\end{equation}

As in the previous cases, we can analyze the lines of $\lines_{s/r}^*$ in the same way we did the lines of $\lines_{s/r}$.
The only difference is that some of these lines now contain one point less. 
This decreases the number of incidences by less than $sn^\alpha$.
In each row, $r$ lines may now contain $\eps n^{\alpha}-1$ points of $\pts$, so not in $\lines_4$.
This decreases the number of incidences by less than $s r \eps n^\alpha$.
Since $r\le 1/\eps$, this quantity is again at most $s n^\alpha$.
Combining the above with \eqref{eq:IncidencesBottomRow25steep} implies that 
\begin{equation} \label{eq:IncidencesBottomRows25steep}
I(\pts,\lines_{s/r}^*\cap\lines_4) \ge n-\frac{rn^{2-2\alpha}}{2s}-\frac{rs\eps^2n^{2\alpha}}{2} - 2sn^\alpha. 
\end{equation}

\parag{Completing the analysis of $\lines_4$.} 
By the condition about the rightmost $r$ columns in the definition of $\lines_{s/r}^*$, we have that $\lines_{s/r}^*\cap \hat{\lines}_{s/r}^* = \varnothing$.
Every line of $\lines$ with slope $-s/r$ contains a point from the $s$ bottom rows or the $r$ rightmost columns of $\pts$.
In other words, every line of $\lines$ with slope $-s/r$ is either in $\lines_{s/r}^*$ or in $\hat{\lines}_{s/r}^*$.
By combining \eqref{eq:LinesRightColumns25steep} and \eqref{eq:LinesBottomRows25steep}, we get that the number of these lines is at most 
\begin{equation} \label{eq:TotalLinesFixedsr25steep}
r(n^{1-\alpha}- s\eps n^{\alpha}) + sn^\alpha - rs\eps n^\alpha = sn^\alpha + rn^{1-\alpha} -2rs\eps n^\alpha.
\end{equation}
By combining \eqref{eq:IncidencesRightColumns25steep} with \eqref{eq:IncidencesBottomRows25steep} and recalling that $r\le 1/\eps = o(n^\alpha)$, we get that the number of incidences with those lines is at least
\begin{align} 
\frac{rn^{2-2\alpha}}{2s}-\frac{rs\eps^2n^{2\alpha}}{2} - O(rn^{1-\alpha}) &+ n-\frac{rn^{2-2\alpha}}{2s}-\frac{rs\eps^2n^{2\alpha}}{2} - 2sn^\alpha \nonumber \\[2mm]
& = n - rs\eps^2n^{2\alpha} + o(n). \label{eq:TotalIncidencesFixedsr25steep}
\end{align}

We recall that $s>r\cdot n^{1-2\alpha}$ and that $s\le n^{1-2\alpha}/\eps$.
Combining this with \eqref{eq:TotalLinesFixedsr25steep}, Lemma~\ref{le:TotientPastN}, and Corollary~\ref{co:phiMN} implies that 
\begin{align*} 
|\lines_4| &\le \sum_{r=1}^{1/\eps} \sum_{r n^{1-2\alpha}< s \le n^{1-2\alpha}/\eps \atop (r,s)=1} \left(s\cdot(n^\alpha -2r\eps n^\alpha)+ rn^{1-\alpha} \right) \\
&= \sum_{r=1}^{1/\eps} \sum_{1\le s \le n^{1-2\alpha}/\eps \atop (r,s)=1} \left(s\cdot(n^\alpha -2r\eps n^\alpha)+ rn^{1-\alpha} \right) - \sum_{r=1}^{1/\eps} \sum_{1\le s \le r n^{1-2\alpha} \atop (r,s)=1} \hspace{-3mm} \left(s\cdot(n^\alpha -2r\eps n^\alpha)+ rn^{1-\alpha} \right) \\
&\le \sum_{r=1}^{1/\eps}  \bigg(\left\lceil \frac{n^{1-2\alpha}/\eps}{r}\right\rceil^2\cdot \frac{r\phi(r)}{2}\cdot n^\alpha - \left\lfloor \frac{n^{1-2\alpha}/\eps}{r}\right\rfloor^2\cdot \frac{r\phi(r)}{2} \cdot 2r\eps n^\alpha \\
&\hspace{70mm}+ \left(\frac{n^{1-2\alpha}}{r\eps}\cdot \phi(r)+O(2^{w(r)})\right)\cdot rn^{1-\alpha} \bigg) \\
&\hspace{40mm} - \sum_{r=1}^{1/\eps}  \left(n^{2-4\alpha}\cdot \frac{r\cdot \phi(r)}{2}\cdot (n^\alpha -2r\eps n^\alpha)+ n^{1-2\alpha}\cdot \phi(r) \cdot rn^{1-\alpha} \right). 
\end{align*}

To apply Lemma~\ref{le:TotientPastN}(c) with $1\le s\le n^{1-2\alpha}/\eps$, we require that $n^{1-2\alpha}/\eps$ is a multiple of $r$, which might not be the case.
For that reason, we round the positive term up and the negative term down, to the closest multiples of $r$.
This way, we maintain a valid upper bound. 
Since the ceiling function increases the expression in it by less than 1, we may replace the ceiling with an extra +1. 
Similarly, we may replace the floor function with an extra -1, obtaining that
\begin{align*} 
|\lines_4| &\le \sum_{r=1}^{1/\eps}  \bigg(\left( \frac{n^{1-2\alpha}}{r\eps}+1\right)^2\cdot \frac{r\phi(r)}{2}\cdot n^\alpha - \left( \frac{n^{1-2\alpha}}{r\eps}-1\right)^2\cdot r^2\phi(r) \cdot \eps n^\alpha + \frac{n^{2-3\alpha}}{\eps}\cdot \phi(r)\\
&\hspace{20mm}+O(r2^{w(r)}n^{1-\alpha}) - n^{2-3\alpha}\cdot \frac{r \phi(r)}{2} +n^{2-3\alpha}\cdot r^2 \phi(r)\cdot \eps - r\phi(r) \cdot n^{2-3\alpha}\bigg) \\
&=\sum_{r=1}^{1/\eps}  \bigg(\frac{n^{2-4\alpha}}{r^2\eps^2} \cdot \frac{r\phi(r)}{2}\cdot n^\alpha - \frac{n^{2-4\alpha}}{r^2\eps^2}\cdot r^2\phi(r) \cdot \eps n^\alpha + \frac{n^{2-3\alpha}}{\eps}\cdot \phi(r) - n^{2-3\alpha}\cdot \frac{3r \phi(r)}{2} \\
&\hspace{20mm}+n^{2-3\alpha}\cdot r^2 \phi(r)\cdot \eps+O\left(r2^{w(r)}n^{1-\alpha}+\frac{\phi(r)n^{1-\alpha}}{\eps}+r^2\phi(r)\eps n^{\alpha}\right)  \bigg) \\
&=\sum_{r=1}^{1/\eps}  \bigg(\frac{\phi(r)}{r}\cdot \frac{n^{2-3\alpha}}{2\eps^2} - r \phi(r)\cdot \frac{3n^{2-3\alpha}}{2} +r^2 \phi(r)\cdot \eps n^{2-3\alpha}\\
&\hspace{50mm}+O\left(r2^{w(r)}\cdot n^{1-\alpha}+\phi(r)\cdot \frac{n^{1-\alpha}}{\eps}+r^2\phi(r)\cdot \eps n^{\alpha}\right)  \bigg).
\end{align*}

We recall that $\alpha<1/2$, which implies that $n^{1-\alpha}=o(n^{2-3\alpha})$. 
By also relying on Lemma~\ref{le:totientProperties} and Lemma~\ref{le:sum2w(r)}, we have that
\begin{align}
|\lines_4| &\le \left(\frac{6}{\eps \pi^2} + o\left(\frac{1}{\eps}\right)\right)\cdot \frac{n^{2-3\alpha}}{2\eps^2} - \left(\frac{2}{\eps^3 \pi^2} + o\left(\frac{1}{\eps^3}\right)\right)\cdot \frac{3n^{2-3\alpha}}{2} \nonumber \\
&\hspace{10mm}+ \left(\frac{3}{2\eps^4 \pi^2} + o\left(\frac{1}{\eps^4}\right)\right)\cdot \eps n^{2-3\alpha}+O\left(\frac{\log\log (1/\eps)}{\eps^2}\cdot n^{1-\alpha}+\frac{1}{\eps^2}\cdot \frac{n^{1-\alpha}}{\eps}+\frac{1}{\eps^4}\cdot \eps n^{\alpha}\right) \nonumber \\[2mm]
&= \frac{3n^{2-3\alpha}}{\pi^2\eps^3} - \frac{3n^{2-3\alpha}}{\pi^2\eps^3} + \frac{3n^{2-3\alpha}}{2\pi^2\eps^3} + o\left(\frac{n^{2-3\alpha}}{\eps^3}\right) = \frac{3n^{2-3\alpha}}{2\pi^2\eps^3} + o\left(\frac{n^{2-3\alpha}}{\eps^3}\right). \label{eq:LinesL425}
\end{align}

Next, summing \eqref{eq:TotalIncidencesFixedsr25steep} over $r$ and $s$ and applying Corollary~\ref{co:phiMN} and Lemma~\ref{le:TotientPastN} leads to 
\begin{align*}
I(\pts,\lines_4) &\ge \sum_{r=1}^{1/\eps} \sum_{r n^{1-2\alpha}< s \le n^{1-2\alpha}/\eps \atop (r,s)=1} \left(n - rs\eps^2n^{2\alpha} + o(n)\right) \\
&= \sum_{r=1}^{1/\eps} \sum_{1\le s \le n^{1-2\alpha}/\eps \atop (r,s)=1} \left(n - rs\eps^2n^{2\alpha} + o(n)\right) - \sum_{r=1}^{1/\eps} \sum_{1\le s \le r n^{1-2\alpha} \atop (r,s)=1} \left(n - rs\eps^2n^{2\alpha} + o(n)\right) \\
&\ge \sum_{r=1}^{1/\eps}  \left((n+ o(n))\cdot \left(\frac{n^{1-2\alpha}/\eps}{r} \cdot \phi(r)+O(2^{w(r)})\right) - \left\lceil\frac{n^{1-2\alpha}/\eps}{r}\right\rceil^2\cdot \frac{r\phi(r)}{2}\cdot r\eps^2n^{2\alpha} \right) \\
&\hspace{30mm} -\sum_{r=1}^{1/\eps}  \left((n+ o(n))\cdot \phi(r) \cdot n^{1-2\alpha} - (n^{1-2\alpha})^2\cdot \frac{r\phi(r)}{2}\cdot r\eps^2n^{2\alpha} \right).
\end{align*}

To apply Lemma~\ref{le:TotientPastN}(c) with $1\le s\le n^{1-2\alpha}/\eps$, we require that $n^{1-2\alpha}/\eps$ is a multiple of $r$, which might not be the case.
By rounding the expression up to the closest multiple of $r$, we maintain a valid lower bound. 
Since the ceiling function increases the expression in it by less than 1, we may replace the ceiling with an extra +1. 
We obtain that 
\begin{align*}
I(\pts,\lines_4) &\ge \sum_{r=1}^{1/\eps}  \bigg(\frac{\phi(r)}{r}\cdot \left(\frac{n^{2-2\alpha}}{\eps}+o\left(\frac{n^{2-2\alpha}}{\eps}\right)\right) - \left(\frac{n^{1-2\alpha}}{r\eps}+1\right)^2\cdot \frac{r^2\phi(r)}{2}\cdot \eps^2n^{2\alpha}  \\
&\hspace{10mm} - \phi(r) \cdot (n^{2-2\alpha}+ o(n^{2-2\alpha})) + r^2\phi(r)\cdot \frac{\eps^2n^{2-2\alpha}}{2} + O(n2^{w(r)})\bigg) \\
&= \sum_{r=1}^{1/\eps}  \bigg(\frac{\phi(r)}{r}\cdot \left(\frac{n^{2-2\alpha}}{\eps}+o\left(\frac{n^{2-2\alpha}}{\eps}\right)\right) - \frac{n^{2-4\alpha}}{r^2\eps^2}\cdot \frac{r^2\phi(r)}{2}\cdot \eps^2n^{2\alpha} + r^2\phi(r)\cdot \frac{\eps^2n^{2-2\alpha}}{2} \\
&\hspace{10mm} - \phi(r) \cdot (n^{2-2\alpha}+ o(n^{2-2\alpha}))  + O(n2^{w(r)}+r\phi(r)\cdot \eps n+r^2\phi(r)\cdot \eps^2 n^{2\alpha})\bigg) \\
&= \sum_{r=1}^{1/\eps}  \bigg(\frac{\phi(r)}{r}\cdot \left(\frac{n^{2-2\alpha}}{\eps}+o\left(\frac{n^{2-2\alpha}}{\eps}\right)\right) + r^2\phi(r)\cdot \frac{\eps^2n^{2-2\alpha}}{2} \\
&\hspace{10mm} - \phi(r) \cdot \left(\frac{3n^{2-2\alpha}}{2}+ o(n^{2-2\alpha})\right)  + O(2^{w(r)}\cdot n+r\phi(r)\cdot \eps n+r^2\phi(r)\cdot \eps^2 n^{2\alpha})\bigg)
\end{align*}

We recall that $\alpha<1/2$, which implies that $n=o(n^{2-2\alpha})$ and $n^{2\alpha}=o(n^{2-2\alpha})$. 
By also relying on Lemma~\ref{le:totientProperties} and Lemma~\ref{le:sum2w(r)}, we have that
\begin{align}
I(\pts,\lines_4) &\ge \left(\frac{6}{\pi^2\eps}+o\left(\frac{1}{\eps}\right)\right)\cdot \left(\frac{n^{2-2\alpha}}{\eps}+o\left(\frac{n^{2-2\alpha}}{\eps}\right)\right) + \left(\frac{3}{2\pi^2\eps^4}+o\left(\frac{1}{\eps^4}\right)\right)\cdot \frac{\eps^2n^{2-2\alpha}}{2} \nonumber \\
&\hspace{30mm} - \left(\frac{3}{\pi^2\eps^2}+o\left(\frac{1}{\eps^2}\right)\right) \cdot \left(\frac{3n^{2-2\alpha}}{2}+ o(n^{2-2\alpha})\right) \nonumber \\
&\hspace{30mm} + O\left(\frac{n\log\log (1/\eps)}{\eps} +\frac{1}{\eps^3}\cdot \eps n+\frac{1}{\eps^4}\cdot \eps^2 n^{2\alpha}\right) \nonumber \\[2mm]
&=\frac{6n^{2-2\alpha}}{\pi^2\eps^2} + \frac{3n^{2-2\alpha}}{4\pi^2\eps^2} - \frac{9n^{2-2\alpha}}{2\pi^2\eps^2} + o\left(\frac{n^{2-2\alpha}}{\eps^2}\right) = \frac{9n^{2-2\alpha}}{4\pi^2\eps^2} + o\left(\frac{n^{2-2\alpha}}{\eps^2}\right). \label{eq:IncL425}
\end{align}

\parag{Wrapping up.}
We recall that $|\lines_1| = |\lines_3|$ and $|\lines_2|=|\lines_4|$.
When $\alpha=1/2$, we have the stronger observation $|\lines_1| = |\lines_2|=|\lines_3|=|\lines_4|$.
By combining this with \eqref{eq:LinesL325} and \eqref{eq:LinesL425}, we obtain that
\begin{equation}
|\lines| = |\lines_1| + |\lines_2| + |\lines_3| + |\lines_4| \le 4\cdot \frac{3n^{2-3\alpha}}{2\pi^2\eps^3} + o\left(\frac{n^{2-3\alpha}}{\eps^3}\right) = \frac{6n^{2-3\alpha}}{\pi^2\eps^3} + o\left(\frac{n^{2-3\alpha}}{\eps^3}\right). \label{eq:TotalLines}
\end{equation}

We recall that $I(\pts,\lines_1) = I(\pts,\lines_3)$ and $I(\pts,\lines_2) = I(\pts,\lines_4)$.
When $\alpha=1/2$, we have the stronger observation $I(\pts,\lines_1) = I(\pts,\lines_2) = I(\pts,\lines_3) = I(\pts,\lines_4)$.
By combining this with \eqref{eq:IncL325} and \eqref{eq:IncL425}, we obtain that
\begin{align}
I(\pts,\lines) = I(\pts,\lines_1) + I(\pts,\lines_2) + I(\pts,\lines_3) + I(\pts,\lines_4) \ge &4\cdot \frac{9n^{2-2\alpha}}{4\pi^2\eps^2} + o\left(\frac{n^{2-2\alpha}}{\eps^2}\right) \nonumber \\[2mm]
&= \frac{9n^{2-2\alpha}}{\pi^2\eps^2} + o\left(\frac{n^{2-2\alpha}}{\eps^2}\right). \label{eq:TotalInc}
\end{align}

Combining \eqref{eq:TotalLines} and \eqref{eq:TotalInc} leads to
\[ \frac{I(\pts,\lines)}{|\pts|^{2/3}|\lines|^{2/3}} = \frac{9n^{2-2\alpha}}{\pi^2\eps^2} \cdot \frac{(\pi^2\eps^3)^{2/3}}{n^{2/3}\cdot (6n^{2-3\alpha})^{2/3}} + o\left(1\right) = \frac{3^{4/3}}{\pi^{2/3}2^{2/3}} +o(1) \approx 1.27. \]

Recall that we may choose any $\eps$ that satisfies $\eps = o(1)$ and $\eps  = \omega(n^{-\alpha})$.
Thus, 
\[ |\lines|=\Theta\left(\frac{n^{2-3\alpha}}{\eps^3}\right) = o(n^2) = o(|\pts|^2) \]
and 
\[ |\lines|=\Theta\left(\frac{n^{2-3\alpha}}{\eps^3}\right) = \omega(n^{2-3\alpha}) = \omega(|\pts|^{2-3\alpha}). \]

\end{document}